\documentclass{amsart}
\usepackage{amssymb}
\usepackage[all]{xy}

\newcommand\boxb[1]{\square_b}

\numberwithin{equation}{section}

\newcommand\paperbody%
        {}

\newtheorem{lemma}{Lemma}
\newtheorem{proposition}{Proposition}

\newtheorem{theorem}{Theorem}
\newtheorem{non-theorem}{Non-Theorem}

\theoremstyle{remark}
\newtheorem{definition}{Definition}


\newcommand\coF{{}^{\mathcal{C}}\kern-2pt\Lambda}

\newcommand\cFTs{{}^{\Phi}\overline{T}\kern-1pt{}^*}

\newcommand\even{\text{even}}
















\newcommand\pidl[2]{\mathcal{I}_{\text{H},+}^{#1}(#2)}

\newcommand\Tr{\operatorname{Tr}}

\newcommand\bTr{\overline{\operatorname{Tr}}}

\newcommand\rTr{\operatorname{Tr_{R}}}

\newcommand\Ch{\operatorname{Ch}}
\newcommand\Td{\operatorname{Td}}
\hyphenation{para-met-rix}

\newcommand\com[1]{\overline{#1}}




\newcommand\ie{i\@.e\@. }

\newcommand\cL{\mathcal{L}}


\newcommand\bbC{\mathbb C}

\newcommand\bbQ{\mathbb Q}
\newcommand\bbR{\mathbb R}
\newcommand\bbS{\mathbb S}

\newcommand\bbZ{\mathbb Z}

\newcommand\CIc{{\mathcal{C}}^{\infty}_c}

\newcommand\CI{{\mathcal{C}}^{\infty}}

\newcommand\Diag{\operatorname{Diag}}

\newcommand\Ps[2]{\Psi^{#1}(#2)}
\newcommand\lPs[3]{\Psi^{#2}_{#1}(#3)}

\newcommand\HPs[2]{\Psi_{\text{H}}^{#1}(#2)}
\newcommand\IHPs[2]{\Psi_{\text{IH}}^{#1}(#2)}
\newcommand\IHlPs[3]{\Psi_{\text{IH},#1}^{#2}(#3)}

\newcommand\pnidl[2]{\mathcal{I}_{\text{H},\pm}^{#1}(#2)}

\newcommand\TpPs[2]{\Psi_{\text{Tp}}^{#1}(#2)}
\newcommand\ITpPs[2]{\Psi_{\text{ITp}}^{#1}(#2)}
\newcommand\ITplPs[3]{\Psi_{\text{ITp},#1}^{#2}(#3)}




\newcommand\cFNs{{}^{\Phi}\overline N\kern-1pt{}^*}

\newcommand\ind{\operatorname{ind}}

\newcommand\Hom{\operatorname{Hom}}
\newcommand\Id{\operatorname{Id}}

\newcommand\nul{\operatorname{null}}

\newcommand\pa{\partial}

\renewcommand\Re{\operatorname{Re}}

\newcommand\Fi{\text{F}}
\newcommand\Se{\text{S}}
\newcommand\Co{\text{C}}

\newcommand\Mand{\text{ and }}

\newcommand\Mat{\text{ at }}

\newcommand\Mon{\text{ on }}

\newcommand\Mover{\text{ over }}

\begin{document}
\title[Star products]
{Star products and local line bundles\\
\'Etoile-produits et fibr\'{e}s en droite locaux}

\author{Richard Melrose}
\address{Department of Mathematics, Massachusetts Institute of Technology}
\email{rbm@math.mit.edu}
\dedicatory{Dedicated to Louis Boutet de Monvel for the occasion
  of his sixtieth birthday\\
}
\keywords{Deformation quantization, star product, Toeplitz algebra, local
  line bundle, gerbe, Szeg\H o projection, contact manifold, index formula,
real cohomology
(Mots clef) Quantification par d\'{e}formation, \'etoile-produit, alg\`{e}bre de
  Toeplitz, fibr\'{e} en droite local, gerbe, projection de Szeg\H o,
  vari\'{e}t\'{e} de contact, formule de l'indice,  cohomologie r\'{e}elle.}
\subjclass{47L80, 53D55}
\begin{abstract} The notion of a local line bundle on a manifold,
  classified by 2-cohomology with real coefficients, is introduced. The
  twisting of pseudodifferential operators by such a line bundle leads to
  an algebroid with elliptic elements with real-valued index, given by a
  twisted variant of the Atiyah-Singer index formula. Using ideas of Boutet
  de Monvel and Guillemin the corresponding twisted Toeplitz algebroid on
  any compact symplectic manifold is shown to yield the star products of
  Lecomte and DeWilde (\cite{MR84g:17014}) see also Fedosov's construction in
  \cite{MR92k:58267}. This also shows that the trace on the star algebra
  is identified with the residue trace of Wodzicki (\cite{Wodzicki7}) and
  Guillemin  (\cite{Guillemin2}).

La notion de fibr\'{e}s en droite locaux sur une vari\'{e}t\'{e} 
diff\'{e}rentiable est introduite.  Ces derniers sont
classifi\'{e}s par la cohomologie r\'{e}elle de dimension 2.  Le twist
d'op\'{e}rateurs pseudodiff\'{e}rentiels par de tels fibr\'{e}s en droite donne
lieu \`{a} un alg\'{e}bro\"{i}de contenant des \'{e}l\'{e}ments elliptiques 
dont l'indice \`{a} valeur dans les r\'{e}els est donn\'{e} par une variante de
la formule de l'indice d'Atiyah et Singer.  En utilisant des id\'{e}es de 
Boutet de Monvel et Guillemin, on montre que, sur toute vari\'{e}t\'{e} 
symplectique compacte, il est possible d'obtenir le produit \'{e}toil\'{e} de
Lecomte et DeWilde \cite{MR84g:17014} (voir aussi la construction de 
Fedosov \cite{MR92k:58267})  \`{a} partir de l'alg\'{e}bro\"{i}de 
associ\'{e} au twist des op\'{e}rateurs de Toeplitz.  Cela \'{e}tablit du
m\^{e}me coup que la trace d\'{e}finie sur cette alg\`{e}bre \'{e}toil\'{e}e 
peut \^{e}tre identifi\'{e}e avec la trace r\'{e}siduelle de Wodzicki 
\cite{Wodzicki7} et Guillemin \cite{Guillemin2}.
\end{abstract}

\maketitle

\tableofcontents

\section*{Introduction}


If $M$ is a symplectic manifold, Lecomte and DeWilde (\cite{MR84g:17014},
see also Fedosov's construction, \cite{MR92k:58267}) showed that $M$ carries a
star product. That is, the space of formal power series in a parameter,
$t,$ with coefficients being smooth functions on $M,$ carries an associative
product
\begin{multline}
\star:\CI(M)[[t]]\times\CI(M)[[t]]\longrightarrow \CI(M)[[t]],\\
(\sum\limits_{j\ge0}a_jt^j)\star(\sum\limits_{l\ge0}b_lt^l)=
(\sum\limits_{k\ge0}c_kt^k),\
c_k=\sum\limits_{j+l\le k}B_{k,j,l}(a_j,b_l)
\label{spallb.13}\end{multline}
where each $B_{k,j,l}$ is a bilinear differential operator and 
\begin{equation}
B_{0,0,0}(\alpha ,\beta )=\alpha \beta ,\ B_{1,0,0}(\alpha ,\beta)=\{\alpha
,\beta \}.
\label{spallb.14}\end{equation}
Here the second term is the Poisson bracket on the symplectic manifold;
note that the normalization of the non-zero coefficient is arbitrary, since
it can be changed by scaling the formal variable $t.$

The star product is \emph{pure} if $B_{k,j,l}=\tilde B_{k-j-l}$ only
depends on the `change of order'. This means that it can be written 
\begin{equation}
a\star b=\left(\sum\limits_{l}t^l\tilde B_l(x,y)\right)a(x)b(y)\big|_{x=y}.
\label{spallb.15}\end{equation}

Boutet de Monvel and Guillemin had an alternate approach to `formal
quantization' in this sense, but it was limited to the \emph{pre-quantized}
case, in which the symplectic form is assumed to be a non-vanishing
multiple of an integral class and hence arises from the curvature of a line
bundle. The corresponding circle bundle is a contact manifold and the
quantization of $M$ arises from the choice of an $\bbS$-invariant Toeplitz
structure on this contact manifold; note that this gives much more than the
star product. This construction is reviewed below and extended to the
general case using the notion of a local line bundle. In a certain sense
the construction here is intermediate between that of Boutet de Monvel and
Guillemin and that of Fedosov (which for the sake of brevity is not
discussed) but still gives more than the latter in so far as the star
algebra is shown to be the quotient of an algebroid of Toeplitz-like
kernels near the diagonal, where composition is only restricted by the
closeness of the support to the diagonal. The quotient is by the
corresponding algebroid of smoothing operators. This carries the usual
trace functional and the unique (normalized) trace on the star algebra is
shown to arise as a residue trace in this way, with the trace-defect
formula used to prove the homotopy invariance of the index as in
\cite{mms3}.

\paperbody

\section{Toeplitz operators}

Under the assumption that $(M,\omega)$ is a symplectic manifold with
$[\omega ]\in H^2(M,\bbZ)$ an integral class, Guillemin, in
\cite{MR96g:58066}, exploiting his earlier work with Boutet de Monvel
(\cite{BoutetdeMonvel-Guillemin1}) used the existence of a Toeplitz algebra
on the circle bundle with curvature $\omega$ to construct a star product on
$M.$ This construction is first sketched and then extended to the
non-integral case.

Let $L$ be an Hermitian line bundle over $M$ with unitary connection having
curvature $\omega/2\pi;$ this exists in virtue of the assumed integrality
of $\omega.$ Let $Z$ be the circle bundle of $L.$ The connection on $L$ induces
a connection 1-form, $\eta\in\CI(Z;\Lambda ^1),$ on $Z,$ fixed by the two
conditions 
\begin{enumerate}
\item $\eta (\pa_{\theta})=1$ for the derivative of the circle action and
\item For each $p\in Z,$ $\eta_p$ is normal to any local section of $Z$
  over $M$ which is covariant constant, at $p,$ as a section of $L.$
\end{enumerate}
It follows that $\eta$ is $\bbS$-invariant and $d\eta =\omega$ is a basic
form. Since $\omega$ is assumed to be symplectic, $\eta$ is a contact form
on $Z.$

Any contact manifold, $Y,$ carries a natural space of `Heisenberg'
pseudodifferential operators, see \cite{Beals-Greiner1}, \cite{Taylor3} and
\cite{Geller1} and \cite{HHH2}. If the contact manifold is compact these
form an algebra, in general the properly supported elements form an
algebra. Let us denote by $\HPs0Y$ the space of Heisenberg
pseudodifferential operators of order $0$ on $Z.$ This also has two ideals,
corresponding to the two orientations of the contact bundle, namely the
upper and lower Hermite ideals $\pnidl0Y\subset \HPs0Y.$ The intersection
of these ideals is the space of (properly supported) smoothing operators.

Although working more from the point of view of complex Lagrangian
distributions, Boutet de Monvel and Guillemin introduced the notion of a
`quantized contact structure' which is the choice of a generalized Szeg\H o
projector $P\in\pidl0Y.$ Assuming $Y$ to be compact, $P^2=P,$ otherwise it
is properly supported and such that $P^2-P$ is a smoothing operator. The
defining property of such a projector is that its symbol, in the sense of
the Heisenberg algebra, should be the field of projections, one for each
point of $Y,$ onto the null space of the field of harmonic oscillators
arising from the choice of a compatible almost complex structure. It is
shown in \cite{MR2000a:58062} that the set of components of such
projections is mapped onto $\bbZ$ by the relative index, once a base point
is fixed.

Having chosen such a projector, the associated space of Toeplitz operators
consists of the compressions of pseudodifferential operators (or Heisenberg
pseudodifferential operators) to the range of $P,$ \ie the operators 
\begin{equation}
\TpPs0Y=\left\{PAP;A\in\Ps0Y\right\}.
\label{spallb.1}\end{equation}
If $Y$ is compact this is again an algebra; otherwise the properly
supported elements form an algebra if the smoothing operators are
appended. In either case, the quotient by the corresponding algebra of
smoothing operators is the `Toeplitz full symbol algebra'  
\begin{equation}
\TpPs0Y/\TpPs{-\infty}Y\simeq\CI(Y)[[\rho ]].
\label{spallb.2}\end{equation}
Here the formal power series parameter is the inverse of the homogeneous,
length, function on the contact line bundle over $Y.$

Returning to the case that the contact manifold, now $Z,$ is a circle
bundle with $\bbS$-invariant contact structure we may choose the projection
$P$ to be $\bbS$-invariant and then consider the subspace of
$\bbS$-invariant Toeplitz operators 
\begin{equation}
\ITpPs0Z=\left\{A\in\TpPs0Z:T_\theta^*A=AT_\theta^*\ \forall\ \theta
\in\bbS\right\}.
\label{spallb.3}\end{equation}
For this subalgebra 
\begin{equation}
\ITpPs0Z/\ITpPs{-\infty}Z\simeq\CI(M)[[\rho ]].
\label{spallb.4}\end{equation}

\begin{theorem}\label{spallb.5} (Guillemin \cite{MR96g:58066}) If
  $(M,\omega)$ is a compact integral symplectic manifold then $\ITpPs0Z,$
  the $\bbS$-invariant part of the Toeplitz operators for a choice of
  $\bbS$-invariant generalized Szeg\H o projector on the circle bundle of
  an Hermitian line bundle with curvature $\omega /2\pi,$ is an algebra and
  this algebra structure induces a star product on $M$ through \eqref{spallb.4}.
\end{theorem}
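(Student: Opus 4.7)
The plan is to establish four things in sequence: that $\ITpPs0Z$ is indeed an algebra, that $\ITpPs{-\infty}Z$ is a two-sided ideal inside it, that the induced product on the quotient \eqref{spallb.4} has the power-series form \eqref{spallb.13} with each $B_{k,j,l}$ a bilinear differential operator, and finally that its first two coefficients are as in \eqref{spallb.14}.

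For the algebra structure, since $P$ is chosen $\bbS$-invariant we have $(PAP)(PBP)=P(APB)P$ with $APB\in\HPs{0}Z$; this is again a Toeplitz operator, and $\bbS$-invariance of each factor gives $\bbS$-invariance of the product because conjugation by $T_\theta ^*$ is an automorphism of $\HPs{0}Z$ preserving $P$. The fact that $\ITpPs{-\infty}Z$ is a two-sided ideal is inherited from the same property of the smoothing operators in $\HPs{0}Z$; compactness of $Z$ removes any proper-support issue. The identification \eqref{spallb.4} is then \eqref{spallb.2} cut down by $\bbS$-invariance: a full Toeplitz symbol is a formal power series in $\rho $ with coefficients in $\CI(Z)$, and since $\rho $ is $\bbS$-invariant (as $\eta $ is), the invariant coefficients descend through the principal $\bbS$-bundle $Z\to M$ to elements of $\CI(M)$. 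Transporting composition through this isomorphism defines $\star $ on $\CI(M)[[\rho ]]$, associative because operator composition is.

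The composition formula in the Heisenberg/Toeplitz full symbol calculus is given term by term by bidifferential operators, with the differential order growing linearly in the $\rho$-degree of the output. Collecting the $\rho ^k$-contribution coming from the $\rho ^j$-part of the first factor and the $\rho ^l$-part of the second produces exactly the coefficient $B_{k,j,l}$ of \eqref{spallb.13}; it is therefore a bilinear differential operator on $M$ obtained by averaging (or, in view of $\bbS$-invariance, simply restricting) the corresponding bidifferential operator on $Z$ to invariant symbols. That $B_{0,0,0}(\alpha ,\beta )=\alpha \beta $ is the standard multiplicativity of the principal Toeplitz symbol.

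The hardest step, and the one I expect to dominate the argument, is the Poisson identity $B_{1,0,0}(\alpha ,\beta )=\{\alpha ,\beta \}$. The Boutet de Monvel--Guillemin subprincipal commutator formula for Toeplitz operators on a contact manifold $(Y,\eta )$ writes the subprincipal symbol of $[PAP,PBP]$ as a Poisson-like bracket of principal symbols, computed with the symplectic form on the symplectic cone $\RR^+\times Y$ and then restricted to the characteristic variety of $P$. For $\bbS$-invariant arguments $\alpha ,\beta $ pulled back through $Z\to M$ the Reeb vector field $\pa _\theta $ annihilates both, so only the horizontal part of the Hamiltonian bivector survives; this horizontal bivector is dual to the restriction of $d\eta =\omega $ to the contact distribution, which pushes down to the Poisson tensor of $(M,\omega )$. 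Matching the normalization in \eqref{spallb.14} reduces to a calculation in Heisenberg coordinates adapted to $\eta $, and once this is in hand all the remaining statements follow.
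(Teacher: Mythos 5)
The paper does not actually give a proof of this theorem: immediately after the statement it simply says ``For the proof see \cite{BoutetdeMonvel-Guillemin1} or \cite{HHH2}.'' Your outline reproduces the standard argument from Guillemin's paper \cite{MR96g:58066}: use $P^2=P$ (valid by compactness) and $\Ps0Z\subset\HPs0Z$ to show the compression $PAP\cdot PBP=P(APB)P$ closes the algebra, use invariance of $P$ under $T_\theta^*$ to keep the $\bbS$-invariant part closed, use that $\rho$ and the coefficients are $\bbS$-invariant so the full-symbol isomorphism \eqref{spallb.2} descends to \eqref{spallb.4}, and finally obtain $B_{0,0,0}=$ product and $B_{1,0,0}=$ Poisson bracket from the symbolic commutator formula on the symplectic cone $\Sigma=\bbR^+\eta\subset T^*Z\setminus 0$, restricted to $\bbS$-invariant degree-zero symbols. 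Your reduction step---that for functions pulled back from $M$ the Reeb direction and the radial direction drop out of the Hamiltonian bivector on $\Sigma$, leaving $\rho$ times the Poisson tensor of $(M,\omega)$---is exactly the computation that makes the normalization come out (up to the harmless rescaling of $t$ the paper already notes). So this is a correct sketch, in the same spirit as the cited proof; it leaves the composition asymptotics of the Heisenberg/Toeplitz symbol calculus as a black box, which is fair given that the paper does the same.
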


\noindent In case $M$ is non-compact, but still with integral symplectic
structure, the choice of a properly supported $\bbS$-invariant projection,
up to smoothing, leads to the same result for the properly supported
Toeplitz operators, with properly supported smoothing operators appended.

For the proof see \cite{BoutetdeMonvel-Guillemin1} or \cite{HHH2}.

\section{Closed 1-forms}

As a slight guide to the discussion of local line bundles below we first
discuss the analogous `geometric model' for 1-dimensional real
cohomology. This result is not used anywhere below.

As is well-known, the closed 1-forms inducing integral 1-dimensional
cohomology classes on a manifold $M$ can be realized in terms of functions
into the circle. Thus,
\begin{equation}
a\in\CI(M;\bbS)\longleftrightarrow\frac1{2\pi i}a^{-1}da\in\CI(M;\Lambda^1)
\label{spallb.52}\end{equation}
is an isomorphism onto the real closed integral 1-forms. One can get a closely
related realization of the cohomology with real coefficients in terms of
`local circle functions' on $M.$

\begin{definition}\label{spallb.53} A \emph{local circle function} on $M$ is a
  smooth map defined on a neighbourhood of the diagonal in $M^2,$
  $A\in\CI(W;\bbS),$ $W\subset M^2$ open, $\Diag\subset W,$ such that 
\begin{equation}
A(x,y)A(y,z)=A(x,z)\ \forall\ (x,y,z)\in V,
\label{spallb.54}\end{equation}
where $V$ is some neighbourhood of the triple diagonal in $M^3.$
\end{definition}

In fact we will only consider germs of such functions at the diagonal, \ie
identify two such functions if they are equal in some neighbourhood of the
diagonal. If $a\in\CI(M;\bbS)$ then $A(x,y)=a(x)a^{-1}(y)$ satisfies
\eqref{spallb.54}. Setting $x=y=z$ in \eqref{spallb.54} shows that
$A\big|_{\Diag}=1.$ Similarly $A(x,y)A(y,x)=1$ near the diagonal. If
$U\subset M$ is a small open set so that $U\times U\subset W$ and $U\times
U\times U\subset V$ then choosing $p\in U$ and setting $a_p(x)=A(x,p),$
$x\in U,$ gives $a_p:U\longrightarrow \bbS$ and $A(x,y)=a_p(x)a_p^{-1}(y)$
on $U\times U.$ Thus $A$ does define such a map, $a,$ locally. Changing the
base point $p$ to another $q\in U$ changes $a_p$ to
$a_q(x)=A(x,q)=A(x,p)A(p,q)=A(p,q)a_p(x),$ \ie only by a multiplicative
constant. It follows that the 1-form 
\begin{equation}
\begin{gathered}
\alpha =\frac1{2\pi i}A^{-1}d_xA\Mat\Diag\simeq M\\
=\frac1{2\pi i}a_p^{-1}da_p
\in\CI(M;\Lambda^1)
\end{gathered}
\label{spallb.55}\end{equation}
is well-defined on $U$ independently of the choice of $p\in U$ and hence is
globally well-defined on $M.$ From the local identification with
$\frac1{2\pi 1}a_p^{-1}da_p$ it is clearly closed. Furthermore, the
vanishing of $\alpha$ implies that $a_p$ is locally constant and hence
$A\equiv 1$ near the diagonal. Thus we have proved

\begin{proposition}\label{spallb.56} The group of germs at the diagonal of
  local circle functions is isomorphic to the space of closed 1-forms.
\end{proposition}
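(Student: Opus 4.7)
The plan is to observe that the preceding paragraphs already do most of the work: the map
\[
 A \longmapsto \alpha = \frac{1}{2\pi i} A^{-1} d_x A \big|_{\Diag}
\]
is shown to be well-defined on germs and to land in the closed real 1-forms, and the vanishing of $\alpha$ is shown to force $A\equiv 1$ near the diagonal, so injectivity is in hand. That this assignment is a homomorphism of abelian groups (pointwise multiplication of circle functions on the left, addition of forms on the right) follows at once from $(AB)^{-1}d(AB)=A^{-1}dA+B^{-1}dB$ applied on the diagonal. So the only substantive step remaining is surjectivity.

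For surjectivity, the idea is to invert the construction locally via the Poincar\'{e} lemma and then to patch. Given a closed $\alpha\in\CI(M;\Lambda^1)$, I would fix an auxiliary Riemannian metric and choose a locally finite cover $\{U_i\}$ of $M$ by geodesically convex sets, so that all non-empty finite intersections are connected and contractible. On each $U_i$, write $2\pi\alpha\restrictedto_{U_i}=dh_i$ with $h_i\in\CI(U_i;\bbR)$, set $a_i=e^{ih_i}:U_i\to\bbS$, and define
\[
 A_i(x,y) = a_i(x)\,a_i(y)^{-1} \Mon U_i\times U_i.
\]
On any non-empty overlap, $a_i$ and $a_j$ have the same logarithmic derivative, so their ratio is locally constant and hence, by connectedness of $U_i\cap U_j$, constant; this constant cancels in the definition of $A_i$, so $A_i=A_j$ on $(U_i\cap U_j)\times(U_i\cap U_j)$.

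The local pieces $A_i$ therefore assemble into a single smooth $A:W\to\bbS$ on the open neighbourhood $W=\bigcup_i U_i\times U_i$ of $\Diag$. On the neighbourhood $V=\bigcup_i U_i\times U_i\times U_i$ of the triple diagonal the cocycle identity \eqref{spallb.54} is immediate from the local product form, and by construction $\frac{1}{2\pi i} A^{-1}d_x A\big|_{\Diag}=\frac{1}{2\pi i} a_i^{-1}da_i=\alpha$ on each $U_i$, so the germ of $A$ is sent to $\alpha$.

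The one place I expect care to be needed is organising the cover so that the cocycle really holds on a full neighbourhood of the \emph{triple} diagonal in $M^3$, not merely on a set like $W\times_M W$; choosing geodesically convex $U_i$ (so that any three sufficiently close points lie in a common $U_i$) handles this uniformly and also guarantees the connectedness used in patching.
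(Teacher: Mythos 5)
Your proposal is correct, and for the parts the paper actually proves \mhy\ well-definedness of the map $A\mapsto\alpha$, closedness, and injectivity \mhy\ you follow the paper's argument essentially verbatim. Where you go beyond the paper is in making two things explicit that the text leaves implicit before declaring ``Thus we have proved'': that the assignment is a group homomorphism, and that it is surjective. The surjectivity argument you give (solve $2\pi\alpha=dh_i$ on a good cover, set $a_i=e^{ih_i}$, observe the ratios $a_i/a_j$ are constant on connected overlaps and hence cancel in $A_i(x,y)=a_i(x)a_i(y)^{-1}$, then glue over $\bigcup_i U_i\times U_i$) is correct and is the natural way to fill that gap. One minor remark: the final worry about the triple diagonal is essentially automatic \mhy\ $\bigcup_i U_i\times U_i\times U_i$ is an open neighbourhood of $\Diag_3$ for any open cover, and Definition~\ref{spallb.53} only asks for the cocycle on \emph{some} such neighbourhood; geodesic convexity is doing real work only in guaranteeing that intersections are connected, which is what the constancy of $a_i/a_j$ needs. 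So the extra caution is harmless but not required.
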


Similarly, if we consider real functions, $\beta \in\CI(W,\bbR),$ defined
near the diagonal which satisfy the additivity condition  
\begin{equation}
\beta (x,y)+\beta (y,z)=\beta (x,z)\Mon V
\label{spallb.57}\end{equation}
and identify the local circle functions $A$ and $e^{2\pi i\beta }A$ then we
arrive at a geometric realization of $H^1(M,\bbR).$

\section{Local line bundles}

The problem with extending Theorem~\ref{spallb.5} to the general case is,
of course, that in the non-integral case there can be no line bundle over
$M$ with curvature the symplectic form. Nevertheless there is a `virtual
object' which plays at least part of the same role. This is closely related
to, but rather simpler than, Fedosov's theorem on the classification of
star products, up to isomorphism, in terms of $H^2(M;\bbR).$

We give a `geometric realization' of $H^2(M;\bbR)$ on any manifold,
possibly with corners. The construction here of `local line bundles' over
$M$ is related to ideas of Murray concerning bundle gerbes (\cite{Murray1})
and more particularly to the discussion of extensions of Azumaya bundles in
\cite{mms3}.

Consider the diagonal  
\begin{equation}
\Diag=\{(z,z)\in M^2\}
\label{17.2.2004.3}\end{equation}
which is naturally diffeomorphic to $M$ under either the left or right
projection from $M^2$ to $M.$ Similarly consider the triple diagonal 
\begin{equation}
\Diag_3=\{(z,z,z)\in M^3\}.
\label{17.2.2004.4}\end{equation}
There are three natural projections from $M^3$ to $M^2$ which we label
$\pi_{F},$ $\pi_{S}$ and $\pi_{C}$ (for `F'irst, `S'econd and `C'entral or
`C'omposite): 
\begin{equation}
\begin{gathered}
\pi_{F}:M^3\ni(x,y,z)\longmapsto (y,z)\in M^2,\\
\pi_{S}:M^3\ni(x,y,z)\longmapsto (x,y)\in M^2\Mand\\
\pi_{C}:M^3\ni(x,y,z)\longmapsto (x,z)\in M^2.
\end{gathered}
\label{17.2.2004.5}\end{equation}

\begin{definition}\label{17.2.2004.1} A \emph{local line bundle} over a
manifold $M$ is a (complex) line bundle $\cL$ over a neighbourhood of
$\Diag\subset M^2$ together with a smooth `composition' isomorphism over a
neighbourhood of $\Diag_3\subset M^3$
\begin{equation}
H:\pi^*_{\Se}\cL\otimes \pi_{\Fi}^*\cL\longrightarrow \pi_{\Co}^*\cL
\label{17.2.2004.2}\end{equation}
with the associativity condition that for all $(x,y,z,t)$ sufficiently
close to the total diagonal in $M^4,$ the same map
\begin{equation}
\cL_{(x,y)}\otimes \cL_{(y,z)}\otimes \cL_{(z,t)}\longrightarrow \cL_{(x,y)}
\label{17.2.2004.8}\end{equation}
arises either by first applying $H$ in the left two factors and then on the
composite, or first in the right two factors and then in the composite, \ie
the following diagramme commutes:
\begin{equation}
\xymatrix{
&\cL_{(x,z)}\otimes \cL_{(z,t)}\ar[dr]^{H_{(x,z,t)}}\\
\cL_{(x,y)}\otimes \cL_{(y,z)}\otimes
\cL_{(z,t)}\ar[ur]^{H_{(x,y,z)}\otimes\Id}
\ar[dr]_{\Id\otimes H_{(y,z,t)}}&& \cL_{(x,t)}.
\\
&\cL_{(x,y)}\otimes \cL_{(y,t)}\ar[ur]_{H_{(x,y,t)}}}
\label{17.2.2004.9}\end{equation}
\end{definition}
\noindent We will really deal with germs at the diagonal of these objects.

Over the triple diagonal itself $H$ necessarily gives an isomorphism 
\begin{equation}
H_{\Diag}:\cL_{\Diag}\otimes \cL_{\Diag}\longrightarrow \cL_{\Diag}.
\label{17.2.2004.6}\end{equation}
If $e_z\not=0$ is an element of $\cL_{(z,z)}$ then $H_{\Diag}$ maps
$e_z\otimes e_z$ to $ce_z$ for some $0\not=c\in\bbC.$ Thus, corresponding
to the two square-roots of $c$ there are exactly two local sections of
$\cL_{\Diag}$ such that $H(c\otimes c)=c.$ On the other hand $\cL_{(x,x)}$ acts
on the right on each $\cL_{(t,x)}$ for $t$ sufficiently close to $x$ as the
space of homomorphisms. The associativity condition \eqref{17.2.2004.9}
means that $\cL_{(x,x)}$ is identified with the linear space of homomorphisms
on $\cL_{(y,x)}$ in a way consistent with its product. Thus either $e$ or
$-e$ must be locally the identity. Hence $\Id$ must exist as a global section
so there is a canonical identification 
\begin{equation}
\cL_{\Diag}\longrightarrow M\times\bbC
\label{17.2.2004.10}\end{equation}
consistent with the action of $\cL_{\Diag}$ on the left or right on $\cL$
through $H.$

Now, consider local trivializations of $\cL.$ Choosing a sufficiently
fine open cover $\{U_i\}$ of $M,$ the products $U_i\times U_i$ give an open
cover of $\Diag\subset M^2$ and are contained in a given neighbourhood of
the diagonal. Thus, for some such open cover, $\cL$ is defined over each
$U_i\times U_i.$ Choose a point $p_i\in U_i$ and consider the bundles 
\begin{equation}
L_{i,p_i}=\cL\big|_{U_i\times\{p_i\}},\ R_{i,p_i}=\cL\big|_{\{p_i\}\times U_i}.
\label{17.2.2004.12}\end{equation}
The composition law $H$ gives an identification 
\begin{equation}
\cL\overset{H}\equiv L_{i,p_i}\otimes R_{i,p_i}\Mover U_i\times U_i
\label{17.2.2004.13}\end{equation}
and also an identification 
\begin{equation}
R_{i,p_i}=L_{i,p_i}^{-1},
\label{17.2.2004.16}\end{equation}
since over the diagonal $\cL$ has been canonically trivialized.

\begin{lemma}\label{17.2.2004.11} Any local line bundle over $M$ has a
multiplicative connection, \ie a connection $\nabla$ such that if $u$ is a
local section of $\cL$ near $(x,y)$ with $\nabla u=0$ at $(x,y)$ and $v$ is a
local section of $\cL$ near $(y,z)$ with $\nabla v=0$ at $(y,z)$ then
$H(u,v)$ is locally constant at $(x,y,z).$ Similarly $\cL$ has a
multiplicative unitary structure, so 
\begin{equation}
|H(u,v)|=|u||v|
\label{spallb.32}\end{equation}
and has a multiplicative Hermitian connection.
\end{lemma}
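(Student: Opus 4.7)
My plan is to construct the structures locally using the tensor factorisation \eqref{17.2.2004.13}--\eqref{17.2.2004.16}, then patch via a \v{C}ech argument whose decisive input is a splitting lemma for multiplicative $1$-forms analogous to Proposition~\ref{spallb.56}.

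For the local step, I would take a cover $\{U_i\}$ of $M$ fine enough that $\cL$ is defined over each $U_i\times U_i$ and $H$ over each $U_i^3$, together with base points $p_i\in U_i$. By \eqref{17.2.2004.13} and \eqref{17.2.2004.16},
\begin{equation*}
\cL\big|_{U_i\times U_i}\;\cong\;L_{i,p_i}\boxtimes L_{i,p_i}^{-1},
\end{equation*}
and under this identification $H\big|_{U_i^3}$ is the pairing contracting the middle two factors of $L_{p_i}(x)\otimes L_{p_i}^{-1}(y)\otimes L_{p_i}(y)\otimes L_{p_i}^{-1}(z)$. Any Hermitian metric and any Hermitian connection on the ordinary line bundle $L_{i,p_i}$ over $U_i$ then induces a Hermitian metric $h_i$ and a Hermitian connection $\nabla_i$ on $\cL\big|_{U_i^2}$, and both are multiplicative for $H\big|_{U_i^3}$ because the canonical pairing $L\otimes L^{-1}\to\bbC$ is isometric and parallel for any Hermitian connection on $L$.

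The globalisation step rests on understanding when two multiplicative structures differ. If $h,h'$ are two multiplicative Hermitian metrics on a common open set, $\phi=\log(h'/h)$ satisfies the additivity condition \eqref{spallb.57} near $\Diag_3$, so the germ-at-the-diagonal argument of Proposition~\ref{spallb.56} yields $\phi(x,y)=f(x)-f(y)$ locally for some $f\in\CI$. Similarly, the difference between two multiplicative connections is a $1$-form $\beta$ on $M^2$ near $\Diag$ obeying $\pi_\Se^*\beta+\pi_\Fi^*\beta=\pi_\Co^*\beta$ near $\Diag_3$; writing $\beta=\sum_i(a_i(x,y)\,dx^i+b_i(x,y)\,dy^i)$ in local coordinates and expanding the three coefficient equations forces $a_i$ to depend only on the first variable, $b_i$ only on the second, and $b_i(x,x)=-a_i(x,x)$, so
\begin{equation*}
\beta\;=\;\pi_1^*\alpha-\pi_2^*\alpha,\qquad \alpha=\sum_i a_i(x,x)\,dx^i\in\CI(M;\Lambda^1).
\end{equation*}

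Applied on each double overlap $(U_i\cap U_j)^2$ these splittings produce \v{C}ech $1$-cocycles $\{f_{ij}\}$ and $\{\alpha_{ij}\}$ with values in the fine sheaves of smooth functions and of $1$-forms, hence coboundaries $f_{ij}=g_i-g_j$ and $\alpha_{ij}=\gamma_i-\gamma_j$. Correcting each $h_i$ and $\nabla_i$ by the ``trivial multiplicative cocycles'' $\exp(\pm(g_i(x)-g_i(y)))$ and $\pm(\pi_1^*\gamma_i-\pi_2^*\gamma_i)$ (signs chosen to cancel the discrepancy) preserves multiplicativity and produces data agreeing on overlaps, defining a global multiplicative Hermitian metric $h$ and a global multiplicative connection. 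For the last assertion I would construct $h$ first, then pick each local $\nabla_i$ to be Hermitian with respect to $h\big|_{U_i^2}$; the resulting $\alpha_{ij}$ are then pure imaginary, so the \v{C}ech correction preserves Hermitian compatibility. The main obstacle I expect is the $1$-form splitting claim, which is the genuinely new ingredient beyond the $\bbS$-valued case treated in Section~2.
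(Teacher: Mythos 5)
Your treatment of the multiplicative connection is correct and is a legitimate alternative to the paper's argument. The paper averages connections on the line bundles $L_{i,p_i}$ over $M$ directly, using the fact that over $U_{ij}$ the bundles $L_{i,p_i}$ and $L_{j,p_j}$ differ only by tensoring with the fixed one-dimensional space $\cL_{(p_j,p_i)}$, so that a partition-of-unity combination $\sum_j\rho_j\nabla_j$ is manifestly consistent; you instead split the difference $1$-forms on $\cL$ itself and run a \v{C}ech argument. Your coordinate computation that $\pi_\Se^*\beta+\pi_\Fi^*\beta=\pi_\Co^*\beta$ forces $\beta=\pi_1^*\alpha-\pi_2^*\alpha$ is correct, and because the resulting $\alpha_{ij}$ is \emph{uniquely} determined by $\beta_{ij}=\nabla_i-\nabla_j$, the relation $\pi_1^*(\alpha_{ij}+\alpha_{jk}-\alpha_{ik})=\pi_2^*(\alpha_{ij}+\alpha_{jk}-\alpha_{ik})$ on a neighbourhood of $\Diag$ genuinely forces $\alpha_{ij}+\alpha_{jk}-\alpha_{ik}=0$, so $\{\alpha_{ij}\}$ is an honest cocycle with values in a fine sheaf and the argument closes. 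In fact your splitting lemma is precisely \eqref{spallb.34}--\eqref{spallb.36} in the proof of Proposition~\ref{17.2.2004.17}; you are proving here what the paper needs one step later.

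The Hermitian half has a genuine gap. The splitting $\phi_{ij}(x,y)=f_{ij}(x)-f_{ij}(y)$ determines $f_{ij}$ only up to an additive constant, so on a triple overlap the identity $\phi_{ij}+\phi_{jk}=\phi_{ik}$ yields only that $f_{ij}+f_{jk}-f_{ik}=c_{ijk}$ is a (locally) constant, not that it vanishes. Consequently $\{f_{ij}\}$ is a \v{C}ech cochain with values in $\CI/\underline{\bbR}$ rather than in the fine sheaf $\CI$, and $H^1(M;\CI/\underline\bbR)\cong H^2(M;\bbR)$ is nonzero in general; the naive partition-of-unity splitting $g_i=\sum_k\rho_k f_{ik}$ produces $g_i-g_j-f_{ij}=\sum_k\rho_k c_{ijk}$, a smooth function rather than a constant, which does not drop out when you form $g_i(x)-g_i(y)$. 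You would need an additional argument, for instance normalizing the identifications $L_{i,p_i}\cong L_{j,p_j}\otimes\cL_{(p_j,p_i)}$ by choosing elements $e_{ji}\in\cL_{(p_j,p_i)}$ whose associator under $H$ has unit modulus, to kill the class $[c_{ijk}]\in H^2(M;\bbR)$ before the \v{C}ech correction can proceed. (Note that the paper's own treatment of the Hermitian case, namely equation \eqref{spallb.33}, does not obviously dodge this: comparing $\langle\cdot,\cdot\rangle_i$ and $\langle\cdot,\cdot\rangle_j$ as metrics on $L_{i,p_i}$ requires a scale on $\cL_{(p_j,p_i)}$, which is exactly the ambiguity generating $c_{ijk}$, so this point deserves care in any version of the proof.)
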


\begin{proof} Using an open cover as described above, choose $p_i\in U_i$
  for each $i$ and a connection $\nabla_i$ on $L_{i,p_i}.$ If we make a
  different choice, $q_i,$ of point in $U_i$ then the composition law $H$
  gives an identification of $L_{i,q_i}=\cL\big|_{U_i\times\{q_i\}}$ with
  $L_{i,p_i}\otimes \cL_{p_i,q_i}.$ The second factor is a fixed complex
  line, so a connection on $L_{i,p_i}$ induces, through $H,$ a connection on
  $L_{i,q_i}.$ Now, choose a partition of unity subordinate to the cover,
  $\rho _i\in\CIc(U_i)$ with $\sum\limits_{i}\rho _i=1.$ We shall modify
  the connection on $L_{i,p_i}$ and replace it by 
\begin{equation}
\nabla=\sum\limits_{j}\rho _i\nabla_j\Mon L_{i,p_i}\Mover U_i.
\label{17.2.2004.14}\end{equation}
Here we use the fact that over $U_{ij}=U_i\cap U_j$ the
  line bundles $L_{i,p_i}$ and $L_{j,p_j}$ are identified by $H$ after
  tensoring with the fixed line $\cL_{p_i,p_j},$ as discussed above, so
  $\nabla_j$ is well-defined on $L_{i,p_i}$ over $U_{ij}$ which contains
  the support of $\rho _j$ in $U_i.$ Directly from the definition, this new
  connection is consistent with the identification of $L_{i,p_i}$ and
  $L_{j,p_j}$ over $U_{ij}.$

Now the connection on $\cL$ over $U_i\times U_i$ induced by taking the dual
connection to $\nabla$ on $R_{i,p_i},$ using the identification
\eqref{17.2.2004.16}, and then the tensor product connection on $\cL$ using
\eqref{17.2.2004.13} is independent of $i.$ That is, it is a global
connection on $\cL$ and from its definition has the desired product property.

The same approach allows one to define a multiplicative Hermitian structure
by taking as Hermitian structure $\langle \cdot,\cdot\rangle_i$ on each
$L_{i,p_i}.$ This induces Hermitian structures on the inverses
$L^{-1}_{i,p_i}$ and hence on $\cL$ over $U_i\times U_i.$ Then if $\rho _i$
is a partition of unity subordinate to the cover, the inner product on $\cL$
over $U_i\times U_i$ 
\begin{equation}
\langle u,v\rangle =\sum\limits_{i}(\rho _i\times\rho _i)\langle u,v\rangle _i
\label{spallb.33}\end{equation}
is consistent with the inner products over the other $U_j\times U_j.$
Unitary metrics on each of the $L_{i,p_i}$ then induce a connection on
$\cL$ which is both multiplicative and unitary, \ie is consistent with the
Hermitian structure.
\end{proof}

\begin{proposition}\label{17.2.2004.17} The left curvature of a product
Hermitian connection, \ie the restriction of the curvature at the diagonal
to the left tangent space, is an arbitrary real closed $2$-form on $M$ 
lying in a fixed class in $H^2(M,\bbR)$ determined by the local line bundle
and product Hermitian structure. Two local line bundles are isomorphic in
some neighbourhood of the diagonal under a unitary isomorphism intertwining the
product structures if and only if the left curvatures define the same
cohomology class; all cohomology classes arise in this way.
\end{proposition}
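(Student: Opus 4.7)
The plan is to work throughout in the local model supplied by Lemma~\ref{17.2.2004.11}. Over $U_i\times U_i$ we have $\cL\cong L_{i,p_i}(x)\otimes L_{i,p_i}^{-1}(y)$ with the product Hermitian connection represented by $\alpha_i(x)-\alpha_i(y)$, where $\alpha_i$ is a unitary connection 1-form on $L_{i,p_i}$. The full curvature on a neighbourhood of the diagonal in $M^2$ is then $F_i(x)-F_i(y)$, whose restriction to the left tangent space along $\Diag$ reduces to $F_i$, the curvature of $L_{i,p_i}$. On overlaps $L_{i,p_i}$ and $L_{j,p_j}$ are identified up to tensoring with the constant one-dimensional space $\cL_{(p_i,p_j)}$, so the various $F_i$ patch to a globally defined closed pure imaginary 2-form on $M$; after the standard normalization this is the real closed 2-form asserted.

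For the independence of the cohomology class, two multiplicative Hermitian connections differ by $i\beta$ for a real 1-form $\beta$ on a neighbourhood of $\Diag$ satisfying the additivity condition $\pi_{\Co}^*\beta=\pi_{\Se}^*\beta+\pi_{\Fi}^*\beta$ near $\Diag_3$ (this is exactly the infinitesimal version of \eqref{spallb.57}). Evaluating additivity on tangent vectors of the form $(w,0,0)$ at $(z,p,z)$ forces the $dx$-part $\beta_L(x,y)$ to depend only on $x$, and symmetrically for the $dy$-part, so in fact $\beta=\alpha(x)-\alpha(y)$ for a globally defined real 1-form $\alpha$ on $M$. The change in left curvature is then $d\alpha$, so the class in $H^2(M,\bbR)$ is well-defined, and conversely adding the multiplicative 1-form $\alpha(x)-\alpha(y)$ realizes every representative in the class.

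For the isomorphism statement one direction is immediate. For the converse I would give local line bundles their natural tensor product and duality, reducing to the claim that a local line bundle $\cL$ with vanishing class is unitarily trivializable intertwining the product structures. Choose a multiplicative Hermitian connection with left curvature exactly zero; in the local model $F_i=0$ so the connection $\alpha_i(x)-\alpha_i(y)$ is flat on a neighbourhood of $\Diag$, and restricts to zero on $\Diag$ itself because the two summands cancel when $dx=dy$. The canonical section $1$ of $\cL_\Diag$ from \eqref{17.2.2004.10} is therefore covariant constant along $\Diag$, and any loop in a neighbourhood of $\Diag$ is homotopic to a loop in $\Diag$, so the holonomy of the flat connection is trivial. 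Parallel transport extends $1$ to a global unitary parallel section $s$, and the two sections $H(\pi_{\Se}^*s\otimes\pi_{\Fi}^*s)$ and $\pi_{\Co}^*s$ of $\pi_{\Co}^*\cL$ are both parallel and both equal $1$ on $\Diag_3$ by the identity property \eqref{17.2.2004.6}, hence they coincide near $\Diag_3$ and yield the required unitary trivialization of $\cL$ as a local line bundle.

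Finally, for surjectivity onto $H^2(M,\bbR)$, given a closed real 2-form $F$ I would observe that $p_1^*F-p_2^*F$ is closed and vanishes on $\Diag$, so the relative Poincar\'e lemma produces a real 1-form $\theta$ on a neighbourhood of $\Diag$ with $d\theta=2\pi(p_1^*F-p_2^*F)$ and $\theta|_\Diag=0$. Take $\cL$ to be the trivial bundle with connection $d+i\theta$; its left curvature is $F$. The composition law is multiplication by a function $h$ determined by $d\log h=\pi_{\Co}^*\theta-\pi_{\Se}^*\theta-\pi_{\Fi}^*\theta$, and the right-hand side is closed and vanishes on $\Diag_3$, hence exact with primitive vanishing on $\Diag_3$, so $h$ can be normalized to be $1$ on $\Diag_3$. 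Associativity then follows because the corresponding combination of the four pullbacks to a neighbourhood of $\Diag_4\subset M^4$ telescopes to zero, and the resulting ratio of $h$-products is both locally constant and equal to $1$ on $\Diag_4$. The main obstacle is the trivialization step of the isomorphism part: it rests entirely on the cancellation that makes the flat product connection restrict to zero on $\Diag$, without which there would be genuine holonomy obstructions; once this observation is in hand, the remaining steps are a careful but routine application of Poincar\'e's lemma near the diagonals $\Diag_k\subset M^k$ combined with the additive analysis modelled on Section~2.
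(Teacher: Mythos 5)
Your proposal is correct, and for two of the three parts of the statement it follows a genuinely different route from the paper. For the well-definedness of the left curvature and its class, you and the paper argue in essentially the same way: the local model $\cL\cong L_{i,p_i}\boxtimes L_{i,p_i}^{-1}$, the observation that the difference of two multiplicative connections is an additive $1$-form near $\Diag_3$, and the deduction that this difference is $\pi_L^*\alpha-\pi_R^*\alpha$ for a global $1$-form $\alpha$, so the left curvature moves only by exact forms.

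For surjectivity the paper is \v{C}ech-theoretic: it takes a good cover, chooses local primitives $\alpha_i$ of $\omega$, transition functions $\phi_{ij}$ with $\alpha_i-\alpha_j=d\phi_{ij}$, notes that $\phi_{ij}+\phi_{jk}+\phi_{ki}$ is locally constant, and glues $L_i\boxtimes L_i^{-1}$ over $U_i\times U_i$ by $e^{i\phi_{ij}}\times e^{-i\phi_{ij}}$; the local line bundle and its product connection are then assembled from this cocycle data. You instead invoke the relative Poincar\'e lemma directly on a tubular neighbourhood of $\Diag$ to get $\theta$ with $d\theta$ the antisymmetrization of $F$ and $\theta|_\Diag=0$, take $\cL$ trivial with connection $d+i\theta$, and then solve $d\log h=-i(\pi_\Co^*\theta-\pi_\Se^*\theta-\pi_\Fi^*\theta)$ for $h$ using the relative Poincar\'e lemma near $\Diag_3$; associativity is then the telescoping identity over $\Diag_4$. (Note the missing factor $-i$ in your equation for $d\log h$ if the connection is written $d+i\theta$; this is a normalization slip, not a gap, and is needed to make $|h|\equiv 1$.) Your construction is more global and streamlined; the paper's is more elementary but requires maintaining cocycle bookkeeping. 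Both are valid.

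For the biconditional on isomorphism classes, the paper does not spell out the ``if'' direction at all; you supply it. Your reduction to the case of trivial class, followed by the choice of a multiplicative Hermitian connection with left curvature identically zero, the observation that the total curvature is then $\pi_L^*0-\pi_R^*0=0$ near $\Diag$, the cancellation that makes the induced connection on $\cL|_\Diag\cong M\times\bbC$ the trivial one (which I verified: the canonical identity section is parallel for any product connection), the trivial holonomy on a tubular neighbourhood that retracts to $\Diag$, and the parallel extension $s$ with $H(\pi_\Se^*s\otimes\pi_\Fi^*s)=\pi_\Co^*s$ by comparing parallel sections agreeing on $\Diag_3$ \mhy all of this is sound and is the right argument. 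One small point worth making explicit is that the tensor product and inverse of Hermitian local line bundles with product structures are again such, with additive left curvature, so the reduction to trivial class is legitimate. In short, the proposal is correct, matches the paper on the first part, replaces the \v{C}ech construction by a relative Poincar\'e-lemma construction on the second, and fills in a step the paper leaves implicit on the third.
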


\begin{proof} Two product connections on a fixed local line bundle differ
  (in a small neighbourhood of the diagonal where they are both defined) by
  a $1$-form $i\alpha.$ The multiplicative condition on the connections
  implies that
\begin{equation}
\alpha _{(x,y)}(v,w)=\alpha _{(x,z)}(v,u)+\alpha _{(y,z)}(u,w)
\label{spallb.34}\end{equation}
for all points $(x,z,y)$ in a small enough neighbourhood of the triple
diagonal and all $v\in T_xX,$ $u\in T_yX$ and $w\in T_zX.$ In particular
$\alpha _{(x,x)}=0$ and $\alpha_{(x,y)}(u,w)+\alpha _{(y,x)}(w,v)=0$ so the
$1$-form defined locally on $M$ by  
\begin{equation}
\beta _x(v)=\alpha _{(x,y)}(v,0),\ v\in T_xX,
\label{spallb.35}\end{equation}
for $y$ fixed close to $x$ is actually globally well-defined and satisfies 
\begin{equation}
\alpha=\pi_L^*\beta -\pi _R^*\beta.
\label{spallb.36}\end{equation}
Similarly the curvature of a product connection is locally the curvature of
a connection on $L_{i,p_i}\boxtimes L^{-1}_{i,p_i}$ coming from a
 connection on $L_{i,p_i}.$ It is therefore locally of the form
$\pi_L^*\omega -\pi^*\omega$ for a closed $2$-form on $M.$ Again it
follows that the $2$-form $\omega$ is well-defined, as the restriction of
the curvature to left tangent vectors at the diagonal.

Thus it remains to show that any real closed $2$-form, $\omega,$ on $M$
arises this way. Consider a good cover of $M,$ so each of the open sets
$U_i$ and all of their non-trivial intersections are contractible. Then on
each $U_i$ there exists a smooth $1$-form, $\alpha _i\in\CI(U_i;\Lambda
^1)$ such that
\begin{equation}
\omega =d\alpha _i\Mon U_i.
\label{spallb.37}\end{equation}
On non-trivial overlaps there exists a smooth function $\phi _{ij}$ such
that 
\begin{equation}
\alpha _i-\alpha _j=d\phi _{ij}\Mon U_i\cap U_j.
\label{spallb.38}\end{equation}
It follows that on non-trivial triple intersections
\begin{equation}
\phi _{ij}+\phi _{jk}+\phi _{ki}=\phi_{ijk}\text{ is constant on }U_i\cap
U_j\cap U_k.
\label{spallb.39}\end{equation}
Now, consider $L_i$ which is the trivial line bundle over $U_i,$ with the
connection $d+i\alpha_i$ which is unitary for the standard Hermitian
structure. Over the open neighbourhood of the diagonal
\begin{equation}
U=\bigcup\limits_{i}(U_i\times U_i)
\label{spallb.40}\end{equation}
use the product isomorphism $e^{i\phi_{ij}}\times e^{-i\phi_{ij}}$ to
identify $L_i\boxtimes L_i^{-1}$ with $L_j\boxtimes L_j^{-1}$ over the
intersection $(U_i\cap U_j)\times(U_i\cap U_j).$ The constancy of the $\phi
_{ijk}$ means that these unitary isomorphisms satisfy the cocycle condition
on triple overlaps, so this gives a well-defined bundle $\cL$ over $U.$
That this is a local line bundle follows immediately from its definition
and the connections patch to give a global unitary connection with
curvature $\omega.$
\end{proof}

Thus the collection of Hermitian local line bundles modulo unitary
multiplicative isomorphisms in some neighbourhood of the diagonal is
identified with $H^2(M;\bbR)$ and the collection of Hermitian local line
bundles with unitary product connections modulo isomorphisms identifying
the connections is identified with the space of real closed 2-forms on $M.$
By extension from the standard case we call the cohomology class 
\begin{equation}
\frac{\omega }{2\pi}\in H^2(M,\bbR)
\label{spallb.41}\end{equation}
the first Chern class of $\cL$ and $\exp(\omega /2\pi)\in
H^{\even}(M;\bbR)$ its Chern character.

\section{Atiyah-Singer index formula}

The Atiyah-Singer formula expresses the analytic index, the difference of
the dimension of the null space and the null space of the adjoint, for an
elliptic pseudodifferential operator in terms of topological data
determined by the principal symbol. Namely if $A\in\Ps m{X;E,F}$ is a
pseudodifferential operator acting between sections of the two (complex)
vector bundles $E$ and $F$ then its principal symbol $\sigma
_m(A)\in\CI(S^*X;\hom(E,F)\otimes N_m)$ defines a homomorphism between the
lifts of $E$ and $F$ to the cosphere bundle, up to a positive diagonal
factor; ellipticity of $A$ is by definition equivalent to invertibility of
this homomorphism. This in turn fixes a compactly supported K-class on $T^*X,$ or
equivalently a K-class for the radial compactification $\com{T^*X}$ of the
cotangent bundle relative to its boundary, the cosphere bundle 
\begin{equation}
[\sigma _m(A)]\in K(\com{T^*X},S^*X].
\label{spallb.16}\end{equation}
The Atiyah-Singer formula is
\begin{multline}
\ind(A)=\dim\nul(A)-\dim\nul(A^*)\\
=\Tr(AB-\Id_F)-\Tr(BA-\Id_E)=\int_{T^*X}\Ch(\sigma _m(A))\Td(X).
\label{spallb.17}\end{multline}
Here $B\in\Ps{-m}{X;F,E}$ is a parametrix for $A,$ so is such that
$AB-\Id_F,$ $BA-\Id_E$ are smoothing operators, $\Ch:K(M,\pa M)\longrightarrow
H^*(M,\pa M)$ is the Chern character and $\Td(X)$ is the Todd class
of the cotangent bundle.

We wish to generalize this formula to include twisting by a local line
bundle as discussed above. To do so, recall the definition of the space of
pseudodifferential operators of order $m.$ If $\Diag\subset X^2$ is the
diagonal in the product then in terms of Schwartz' kernels 
\begin{equation}
\Ps m{X;E,F}=I^m(X^2,\Diag;\Hom(E,F)\otimes\Omega _R).
\label{spallb.18}\end{equation}
Here $I^m(M,Y;G)$ is the space of conormal distributions of order $m,$
introduced explicitly by H\"ormander in \cite{Hormander3} for any embedded
submanifold $Y$ of a manifold $M$ and any vector bundle $G$ over $M.$ In
the particular case \eqref{spallb.18}, $\Hom(E,F)\equiv \pi_L^*F\otimes
\pi_R^*E'$ is the `big' homomorphism bundle over the product and $\Omega
_R=\pi_R^*\Omega$ is the right-density bundle, allowing invariant
integration; here $\pi_L,\pi_R:X^2\longrightarrow X$ are the two
projections. The symbol $\sigma _m(A)$ for $A\in\Ps m{X;E,F}$ is then the
leading asymptotic term in the Fourier transform, in directions transversal
to the diagonal, of the kernel and is naturally identified with a section
of the `little' homomorphism bundle $\hom(E,F)=\Hom(E,F)|_{\Diag}$ lifted to
the conormal bundle of $\Diag,$ which is to say the cotangent bundle of $X.$ 

As in \cite{mms3} we can general this space of kernels by twisting with a vector
bundle, even if this bundle is only defined in a neighbourhood of the
diagonal. In this way we will obtain a space of `kernels' with supports in
a sufficiently small neighbourhood of $\Diag\subset X^2.$ In \cite{mms3}
this was considered in the case of the homorphism bundle for a projective
vector bundle and also for local line bundles which are $N$th roots of line
bundles over $X.$ Here we can allow the more general case of a local line
bundle as discussed above and define 
\begin{equation}
\lPs {\cL,\epsilon}m{X;E,F}=
I^m(B_\epsilon,\Diag;\Hom(E,F)\otimes\cL\otimes\Omega _R)
\label{spallb.19}\end{equation}
where $B_\epsilon$ is a sufficiently small neighbourhood of the diagonal
over which the local line bundle $\cL$ exists. For definiteness, and
because it is related to Weyl quantization, we will take $B_\epsilon$ to
be the points of $X^2$ distant less than $\epsilon$ from the diagonal with
respect to a metric on $X$ on the two factors of $X^2.$ 
  
In general the elements if $\lPs{\cL,\epsilon}m{X;E,F}$ do not compose
freely as do those of $\Ps m{X;E,F};$ rather it is necessary for the
supports to be sufficiently close to the diagonal. Thus suppose $E,F$ and
$G$ are three vector bundles over $X.$ Composition  
\begin{equation}
\Ps m{X;F,G}\cdot\Ps{m'}{X;E,F}\subset\Ps{m+m'}{X;E,G}
\label{spallb.20}\end{equation}
in the standard case, reduces to a push-forward operation on the
kernels. Namely 
\begin{multline}
I^m(X^2,\Diag,\Hom(F,G)\otimes\Omega _R)\times
I^{m'}(X^2,\Diag;\Hom(E,F)\otimes \Omega _R)\\
\longrightarrow
I^{m+m'}(X^2,\Diag,\Hom(E,G)\otimes\Omega _R).
\label{spallb.21}\end{multline}

This push-forward result, and correspondingly the composition
\eqref{spallb.20} can be localized on $X^2$ in each factor. Thus,
localizing away from the diagonal gives a smooth term and this results in a
smoothing operator. Localization near a point on the diagonal in either
factor allows the vector bundles to be trivialized and then the result
reduces to the scalar case for open sets in Euclidean space. There, or even
globally, the elements of $I^m(X^2;\Diag),$ which are the \emph{classical}
(so polyhomogeneous) conormal distributions may be approximated by smooth
functions within the somewhat larger class of conormal distributions `with
bounds' (\ie of type $1,0.)$ In the smoothing case, \ie for $m=m'=-\infty,$
\eqref{spallb.21} becomes 
\begin{multline}
\CI(X^2;\Hom(F,G)\otimes\Omega _R)\times\CI(X^2;\Hom(E,F)\otimes\Omega _R)\\
\longrightarrow \CI(X^3;\pi_R^*\otimes\pi_M^*F'\otimes
\pi_M^*F\otimes\pi_R^*E\otimes\pi_M^*\Omega \otimes\pi_R^*\Omega )\\
\longleftrightarrow\CI(X^2;\Hom(E,F)\otimes\Omega _R).
\label{spallb.22}\end{multline}
Here the three projections $\pi_L,\pi_M,\pi_R:X^3\longrightarrow X$ are
used and for the second map the pairing between $E$ and $E'$ leads to a
density in the middle factor which is integrated out. Thus
\eqref{spallb.22} is a more explicit, and invariant, version of the
composition formula
\begin{equation}
A\circ B(x,y)=\int_X A(x,z)B(z,y)dz.
\label{spallb.23}\end{equation}
In particular to extend \eqref{spallb.20} to the kernels in
\eqref{spallb.19} it is only necessary to see that the smooth composition
makes sense as in \eqref{spallb.22} since the singularities of the kernels
behave exactly as in the standard case. In fact in the presence of a local
line bundle $\cL,$ \eqref{spallb.22} is replaced by
\begin{multline}
\CI(B_{\epsilon };\Hom(F,G)\otimes\cL\otimes\Omega
_R)\times\CI(B_{\epsilon '};\Hom(E,F)\otimes\cL\otimes\Omega _R)\\
\longrightarrow \CI(\pi_S^{-1}B_\epsilon\cap\pi_F^{-1}B_{\epsilon '}
;\pi_R^*G\otimes\pi_M^*F'\otimes
\pi_M^*F\pi_S^*\cL\otimes\pi_F^*\cL
\otimes\pi_R^*E\otimes\pi_M^*\Omega \otimes\pi_R^*\Omega )\\
\longleftrightarrow\CI(B_\eta ;\Hom(E,F)\otimes\cL\Omega _R).
\label{spallb.22a}\end{multline}
Here $\pi_S,\pi_F,\pi_C:X^3\longrightarrow X^2$ are the projections from
\eqref{17.2.2004.5} and give, over a sufficiently small neighbourhood of
the triple diagonal, an identification
\begin{equation}
H:\pi_S^*\cL\otimes\pi_F^*\cL\longrightarrow \pi_C^*\cL.
\label{spallb.24}\end{equation}
Notice that if $B_\epsilon$ and $B_\epsilon'$ are sufficiently small
neighbourhoods of the diagonals then
$\pi_S^{-1}B_\epsilon\cap\pi_F^{-1}B_{\epsilon '}$ is indeed an arbitrarily
small neigbourhood of the triple diagonal, projecting under $\pi_C$ to a
neighbourhood of the diagonal $B_\epsilon,$ small with $\epsilon
+\epsilon'.$

Thus, when the product \eqref{spallb.22a} is localized near a point on the
diagonal in each factor, and these points can always be taken to be the
same, $\cL$ reduces to $\pi_L^*L\otimes \pi_R^*L^{-1}$ and it becomes
\eqref{spallb.22} with $E,F$ and $G$ all replaced by $E\otimes L,$
$F\otimes L$ and $G\otimes L.$ Thus indeed we arrive at the restricted, but
associative, product 
\begin{equation}
\lPs {\cL,\epsilon }m{X;F,G}\cdot\lPs{\cL,\epsilon '}{m'}{X;E,F}\subset
\lPs{\cL,\eta }{m+m'}{X;E,G}
\label{spallb.25}\end{equation}
for $\epsilon +\epsilon '$ small compared to $\eta.$ This product can
still be written as in \eqref{spallb.23} but with the associative product
$H$ giving the pairing on $\cL.$ Notice that the symbol is well-defined, as
it is in the standard case, and since it is just a section of the
restriction to the diagonal of the bundle it leads to a short exact
sequence 
\begin{equation}
\xymatrix{\lPs{\cL,\epsilon }{m-1}{X;E,F}\ar[r]&
\lPs{\cL,\epsilon }m{X;E,F}\ar[r]^-{\sigma _m}&
\CI(S^*X;\hom(E,F)\otimes N_m)}
\label{spallb.26}\end{equation}
in which the twisting local bundle $\cL$ does not appear in the symbol.

\begin{theorem}\label{spallb.27} If $A\in\lPs{\cL,\epsilon }m{X;E,F}$ is
  elliptic, in the sense that $\sigma _m(A)$ is invertible, and $\epsilon
  >0$ is sufficiently small then there is a parametrix
  $B\in\lPs{\cL,\epsilon '}{-m}{X;F,E},$ for any $\epsilon'>0$ sufficiently
  small, such that $AB-\Id$ and $BA-\Id$ are smoothing operators and then
  the index
\begin{equation}
\ind(A)=\Tr[A,B]\in\bbR
\label{spallb.28}\end{equation}
is independent of the choice of $B,$ is log-multiplicative for elliptic
  operators  
\begin{equation}
\ind(AA')=\ind(A)+\ind(A')
\label{spallb.29}\end{equation}
is homotopy invariant under elliptic deformations, is additive under direct
  sums and so defines an additive map 
\begin{equation}
\ind_{\cL}:K(\com{T^*X};S^*X)\longrightarrow \bbR,\
  \ind(A)=\ind_{\cL}([E,F,\sigma _m(A)]),
\label{spallb.30}\end{equation}
which is given by a variant of the Atiyah-Singer formula 
\begin{equation}
\ind_{\cL}(a)=\int_{T^*X}\Ch(a)\Td(X)\exp(\omega /2\pi)
\label{spallb.31}\end{equation}
where $\omega/2\pi\in H^2(X;\bbR)$ is the first Chern class of the local line
  bundle $\cL.$
\end{theorem}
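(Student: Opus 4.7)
My plan is to structure the argument in four parts corresponding to the conclusions of the theorem: (i) construction of the parametrix and well-definedness of the index, (ii) the algebraic properties (log-multiplicativity, homotopy invariance, additivity) and their K-theoretic consequence, (iii) the index formula in the special case where $\cL$ comes from a genuine line bundle, and (iv) reduction of the general case to this special case via a local index computation.

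For (i), the symbol exact sequence \eqref{spallb.26} does not involve $\cL$, so ellipticity provides $\sigma_m(A)^{-1}$ and the standard iterative construction (symbolic inversion, correction modulo lower order, asymptotic summation) produces $B \in \lPs{\cL,\epsilon'}{-m}{X;F,E}$ such that $AB - \Id_F$ and $BA - \Id_E$ are smoothing, provided $\epsilon + \epsilon'$ is small compared to the tolerance in \eqref{spallb.25}. The crucial point making the index well defined is the canonical trivialization \eqref{17.2.2004.10} of $\cL$ on the diagonal: a smoothing kernel in $\lPs{\cL,\epsilon}{-\infty}{X;E,E}$ restricts on $\Diag$ to an ordinary endomorphism-valued density on $X$, so $\Tr(AB - \Id_F)$ and $\Tr(BA - \Id_E)$ are genuine real numbers, and $\ind(A) = \Tr[A,B]$ is a real number. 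Independence of $B$ follows from cyclicity of the trace applied to a smoothing perturbation, together with the fact that the relevant compositions can be arranged within the support constraints.

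For (ii), log-multiplicativity \eqref{spallb.29} follows by using $B'B$ as a parametrix for $AA'$ (shrinking $\epsilon$ as necessary) and expanding
\begin{equation*}
\Tr[AA',B'B] = \Tr(A[A',B']B) + \Tr([A,B]A'B'),
\end{equation*}
which reduces to $\ind(A) + \ind(A')$ by cyclicity. Homotopy invariance comes from differentiating $\ind(A_t)$ along a smooth elliptic family, recognizing $\frac{d}{dt}\ind(A_t)$ as a trace of a commutator (using $\frac{dB_t}{dt} = -B_t \dot A_t B_t$ modulo smoothing), hence zero. Additivity on direct sums is immediate. These three properties show that $\ind$ factors through $K(\com{T^*X},S^*X)$, giving the homomorphism $\ind_{\cL}$ in \eqref{spallb.30}.

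For (iii) and (iv), if $\cL = L \boxtimes L^{-1}$ for a genuine Hermitian line bundle $L$ on $X$ with unitary connection of curvature $\omega/2\pi$, then the canonical identification $\Hom(E,F) \otimes L \boxtimes L^{-1} \simeq \Hom(E \otimes L, F \otimes L)$ near the diagonal produces an isomorphism $\lPs{\cL,\epsilon}{m}{X;E,F} \simeq \Ps{m}{X;E\otimes L, F \otimes L}$ intertwining traces and parametrices. The classical Atiyah-Singer formula \eqref{spallb.17} then gives
\begin{equation*}
\ind(A) = \int_{T^*X}\Ch(\sigma_m(A))\Ch(L)\Td(X) = \int_{T^*X}\Ch(\sigma_m(A))\exp(\omega/2\pi)\Td(X),
\end{equation*}
which is \eqref{spallb.31}. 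For general $\cL$, I would cover $X$ by a good cover $\{U_i\}$ as in the construction used in Proposition~\ref{17.2.2004.17}, on which $\cL\big|_{U_i \times U_i}$ is trivialized as $L_i \boxtimes L_i^{-1}$ for a locally defined line bundle with connection of curvature $\omega/2\pi$. Over each patch the local index density (computed, e.g., by Getzler rescaling of a heat kernel or by the Mellin transform/residue trace approach of \cite{Guillemin2}) takes the form $\Ch(\sigma_m(A))\Td(X)\exp(\omega/2\pi)$ since this density depends only on the curvature of the local line bundle, not on the line bundle itself.

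The principal obstacle is this last step: making rigorous the claim that the local index densities assemble into a globally defined form on $T^*X$ even though the underlying line bundles $L_i$ do not assemble globally. The resolution uses the cocycle data \eqref{spallb.38}, \eqref{spallb.39} of the construction in Proposition~\ref{17.2.2004.17}: the transition isomorphisms between the $L_i$ are multiplication by constants of modulus one, which have trivial Chern character, so they affect neither the local curvature form $\omega/2\pi$ nor the Chern-Weil representative of its exponential. Consequently the integrand on overlaps is independent of $i$, and the global formula \eqref{spallb.31} follows.
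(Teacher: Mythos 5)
Your parts (i) and (ii) take a genuinely different (and more elementary) route than the paper, and part (iv) has a real gap.

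On (i)--(ii): the paper does \emph{not} argue homotopy invariance and log-multiplicativity by direct differentiation and naive cyclicity. Because the index is a priori real-valued rather than integral, the usual ``integer-valued continuous function is constant'' argument is unavailable, and the paper instead builds the residue trace $\rTr$, a holomorphic family $Q(z)$ of complex order $z$ with $Q(0)=\Id,$ the derivation $D_Q,$ and the trace-defect formula $\bTr_Q([T,S])=\rTr(SD_QT),$ arriving at $\ind(a)=\rTr(a^{-1}D_Qa)$ from which homotopy invariance \eqref{spallb.63} and multiplicativity \eqref{spallb.71} are read off. Your more elementary route \emph{can} be made to work, but only if you are careful that cyclicity $\Tr(ST)=\Tr(TS)$ is invoked with one factor smoothing (the paper's extension of \eqref{spallb.43}); for instance, for homotopy invariance one must substitute $\dot B_t=-B_t\dot A_tB_t$ modulo smoothing, expand, and move smoothing factors around. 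As written your argument glosses over this, and your displayed identity $\Tr[AA',B'B]=\Tr(A[A',B']B)+\Tr([A,B]A'B')$ is simply false (already formally, the left side is not equal to the sum of the two trace arguments on the right); the correct algebra is to write $(AA')(B'B)-\Id=(AB-\Id)+A(A'B'-\Id)B$ and likewise for $(B'B)(AA')-\Id,$ then cycle the smoothing factors. So this part is recoverable but as stated incomplete, and it does not anticipate that the paper's residue-trace detour is strategic (it feeds the later identification of the star-product trace).

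The genuine gap is in (iv). There is no ``local index density'' in the sense you use for a general elliptic pseudodifferential operator: the supertrace of the short-time heat kernel (or any local spectral invariant produced by the Mellin transform) is \emph{not} given by a universal Chern--Weil expression in the symbol except for Dirac-type operators, and without that you cannot conclude that ``over each patch the local index density takes the form $\Ch(\sigma_m(A))\Td(X)\exp(\omega/2\pi)$.'' This is precisely why the paper first uses homotopy invariance, multiplicativity and additivity to show that $\ind$ descends to $K^0(\com{T^*X},S^*X),$ then observes it is a homomorphism to $\bbR$ and hence kills torsion, so it suffices to verify \eqref{spallb.31} on classes spanning $K^0\otimes\bbR.$ Those are represented by (bundle-twisted) signature/Dirac operators (Atiyah--Singer's original observation), and for \emph{those} the heat-kernel local index theorem applies, with the twisting by $\cL$ contributing exactly the extra factor $\exp(\omega/2\pi)$ (as in \cite{mms3}); the odd case follows by suspension. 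Your good-cover gluing argument is sound as a way of seeing that the Chern--Weil integrand is globally defined despite the $L_i$ not patching, but it must be applied \emph{after} the K-theory reduction to Dirac operators, not to a general elliptic $A.$

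Your part (iii) (the case $\cL=L\boxtimes L^{-1}$ reduces to the classical AS formula for $E\otimes L,F\otimes L$) is correct and consistent with what the paper does implicitly.
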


\begin{proof} The proof of this result is essentially the same as that of
  the twisted index theorem in \cite{mms3}; we recall the steps.

First we recall that the symbol calculus for the twisted pseudodifferential
operators has the same formal properties as in the untwisted case -- it is
the smoothing part which is restricted by support conditions. Thus the
standard proofs of the existence of a parametrix, $B,$ for an elliptic
element carry over unchanged. Furthermore the set of parametrices is affine
over the smoothing operators (with restricted support).

First we consider the special case that $E=F.$ The smoothing operators are
only constrained away from the diagonal and the trace functional is
well-defined as usual as the integral of the Schwartz kernel over the diagonal
\begin{equation}
\Tr(A)=\int_X A\big|_{\Diag}.
\label{spallb.42}\end{equation}
When the composition of smoothing operators is defined, 
\begin{equation}
\Tr([A,B])=0
\label{spallb.43}\end{equation}
and this identity extends, by continuity as indicated above, to the case
where one of the operators is a pseudodifferential operator.

Now, from this it follows that the definition, \eqref{spallb.28}, of
$\ind(A)$ is independent of the choice of parametrix $B$ since if $B,$ $B'$
are two parametrices then $B_t=(1-t)B+tB'$ is a family of parametrices and  
\begin{equation}
\frac{d}{dt}\Tr([A,B_t])=\Tr([A,B'-B])=0
\label{spallb.44}\end{equation}
since $B'-B$ is smoothing. 

It is vital to establish the homotopy invariance of this index. To do so we
use the trace-defect formula from \cite{Melrose-Nistor1}; it is very closely
related to the proof of the Atiyah-Patodi-Singer index theorem in
\cite{MR96g:58180}. First we may define the residue trace on
$\lPs{\cL,\epsilon }\bbZ{X;E}$ following the idea of Guillemin
\cite{Guillemin2}. Namely, the residue trace can be defined as the reside
at $z=0$ of the meromorphic function 
\begin{equation}
\rTr(A)=\lim_{z\to0}zF(z),\ F(z)=\Tr(AQ(z))
\label{spallb.45}\end{equation}
provided $Q(z)$ is a family of pseudodifferential operators of complex order
$z$ which is everywhere elliptic and satisfies $Q(0)=\Id.$ Even in the
twisted case we can find such a family. One approach, indicated in
\cite{mms3}, is to take a generalized Laplacian, $L\in\lPs{\cL,\epsilon
}2{X;E},$ construct the singularity of its formal heat kernel and then take
the Laplace transform. The construction of the singularity, at
$\{t=0\}\times\Diag$ of the heat kernel $e^{-tL}$ is known to be completely
local and symbolic (see for example \cite{MR96g:58180} where this is done
in a more general case) and hence can be carried out in the twisted case, up
to a smoothing error term and with support in any preassigned neighbourhood
of the diagonal. The virtue of this construction is that it gives
an entire family of operators $Q(z)$ of complex order $z$ such that 
\begin{multline}
Q(0)=\Id\Mand Q(z)Q(-z)=\Id+R(z),\\ \bbC\ni z\longrightarrow
\lPs{\cL,\epsilon }{-\infty}{X;E}\text{ entire with }R(0)=0,\ R'(0)=0.
\label{spallb.58}\end{multline}
The vanishing of $R'(0)$ follows by direct differentiation of the defining
identity.

The proof that the function $F(z)$ in \eqref{spallb.45} is meromorphic is
again the same as in the standard case, as in the (corrected version of)
the original argument of Seeley (\cite{MR38:6220}) and $F(z)$ has at most a
simple pole at $z=0.$ Furthermore the residue at $z,$ defining $\rTr(A)$ is
independent of the choice of $Q(z)$ since if $Q'(z)$ is another such family
then $Q'(z)-Q(z)=zE(z)$ where $E(z)$ is also an entire family of operators
of complex order $z.$ Since the residue reduces to the same local
computation as in the untwisted case, it is given by the same formula,
namely the integral over the cosphere bundle of the trace of the term of
homogeneity $-n$ in the full symbol expansion. In particular  
\begin{equation}
\rTr(\Id)=0.
\label{spallb.59}\end{equation}

The regularized trace of $A,$  
\begin{equation}
\bTr_Q(A)=\lim_{z\to0}(\Tr(AQ(z))-\rTr(A)/z)
\label{spallb.46}\end{equation}
does depend on the choice of the family $Q(z).$ However once a choice is
made, it gives a functional extending the trace. In the special case that
we are considering where $A\in\lPs{\cL,\epsilon }\bbZ{X;E}$ `acts on a
fixed bundle' this allows the index to be rewritten  
\begin{equation}
\ind(A)=\bTr_Q([A,B]).
\label{spallb.47}\end{equation}
The family $Q$ also defines a derivation on the algebroid
$\lPs{\cL,\epsilon }\bbZ{X;E}.$ Namely 
\begin{equation}
D_QT=\frac{d}{dz}Q(z)TQ(-z)\big|_{z=0}.
\label{spallb.48}\end{equation}
Notice that the family on the right is an entire family of fixed order,
one less than that of $T$ (since $Q(z)$ is principally diagonal) so
\begin{equation}
D_Q:\lPs{\cL,\epsilon }m{X;E}\longrightarrow\lPs{\cL,\epsilon+\delta}m{X;E}
\label{spallb.49}\end{equation}
if $\epsilon$ and $\delta$ are small enough. This is a derivation in the
sense that 
\begin{equation}
D_Q(T_1T_2)=(D_QT_1)T_2+T_1(D_QT_2)
\label{spallb.50}\end{equation}
provided all supports are small enough. In fact $D_Q$ is independent of the
choice of $Q$ up to an interior derivation. Notice that for any
$T\in\lPs{\cL,\epsilon }\bbZ{X;E}$  
\begin{equation}
\rTr(D_QT)=0
\label{spallb.64}\end{equation}
since this is the residue at $\tau=0$ of 
\begin{multline*}
\Tr(D_QTQ(\tau)=\frac{d}{dz}\Tr\left(Q(z)TQ(-z)Q(\tau)\right)\big|_{z=0}\\
=\frac{d}{dz}\Tr\left(TQ(-z)Q(\tau)Q(z)\right)\big|_{z=0}\\
=\frac{d}{dz}\Tr\left(T(Q(\tau)+E(z,\tau)\right)\big|_{z=0}
\label{spallb.65}\end{multline*}
where $W(z,\tau)$ is entire in both variables with values in the smoothing
operators. Thus there is no singularity at $\tau=0.$ 

The trace-defect formula now follows directly. If $T,S\in\lPs{\cL,\epsilon
}\bbZ{X;E}$ with $\epsilon >0$ (and $\delta >0$ from $Q)$ small enough then 
\begin{equation}
\bTr_Q([T,S])=\rTr(SD_QT).
\label{spallb.51}\end{equation}
Indeed using the trace identity when $\Re z<<0,$  
\begin{multline}
\Tr\left([T,S]Q(z)\right)=\Tr\left(SQ(z)T-STQ(z)\right)\\
=\Tr\left(S(Q(z)TQ(-z)-T)Q(z)\right)-\Tr\left(SQ(z)TR(-z)\right).
\label{spallb.60}\end{multline}
The last term here is the trace of an entire family of smoothing operators,
vanishing at $z=0.$ Furthermore, $Q(z)TQ(-z)-T$ is an entire family of
operators of fixed order, vanishing at $z=0$ so can be written
$zD_QT+O(z^2)$ and we arrive at \eqref{spallb.51}.

Now the residue trace vanishes on operators of low order so is a trace on
the symbolic quotient 
\begin{equation}
\lPs{\cL,\epsilon}\bbZ{X;E}/\lPs{\cL,\epsilon }{-\infty}{X;E}
\label{spallb.61}\end{equation}
in which an elliptic operator is, by definition, invertible. Thus the index
of an invertible element $a$ in \eqref{spallb.61} is 
\begin{equation}
\ind(a)=\Tr([A,B])=\rTr(a^{-1}D_Qa)
\label{spallb.62}\end{equation}
which is also independent of the choice of $Q.$ In this case the homotopy
invariance follows directly, since if $a_t$ is an elliptic family depending
smoothly on a parameter $t$ then 
\begin{multline}
\frac{d}{dt}\ind(a_t)=\frac{d}{dt}\rTr(a^{-1}_tD_Qa_t)\\
=\rTr\left(a^{-1}_tD_Q\dot a_t-a^{-1}_t\dot a_ta^{-1}_tD_Qa_t\right)
=\rTr\left(D_Q(\dot a_ta^{-1}_t)\right)=0
\label{spallb.63}\end{multline}
where $\dot a_t$ denotes the $t$-derivative and \eqref{spallb.64} has been used.

This proof only covers directly the case of elliptic operators on a fixed
bundle $E.$ However, for $A\in\lPs{\cL,\epsilon}\bbZ{X;E,F},$ elliptic
between two different bundles, only relatively minor modifications are
required. Namely we need to choose entire families as above, $Q_E(z)$ and
$Q_F(z)$ for the two bundles. The definition of the index is modified to 
\begin{multline}
\ind(A)=\Tr(AB-\Id_F)-\Tr(BA-\Id_E)\\
=\bTr_F(AB)-\bTr_E(BA)-\bTr_F(\Id)+\bTr_E(\Id)
\label{spallb.66}\end{multline}
where $\bTr_E$ and $\bTr_F$ are the regularized traces defined by $Q_E$ and
$Q_F$ on twisted operators on $E$ and $F.$ Independence of choice follows
as before.

To see homotopy invariance we define the operator 
\begin{equation}
D_Q:\lPs{\cL,\epsilon}\bbZ{X;E,F}\longrightarrow \lPs{\cL,\epsilon}\bbZ{X;E,F},\
D_QA=\frac{d}{dz}\big|_{z=0}Q_F(z)AQ_E(-z)
\label{spallb.67}\end{equation}
for any bundles with a fixed choices of the regularizing families. From the
vanishing of $\frac{d}{dz}Q_E(z)Q_E(-z)$ at $z=0$ it follows that these are
again derivations in a module sense. From \eqref{spallb.66} the index is
the regularized value at $z=0$ of
\begin{multline}
\Tr(BQ_F(z)A)-\bTr(BAQ_E(z))-\bTr_F(\Id)+\bTr_E(\Id)\\
=\Tr\left(B(Q_F(z)AQ(-z)-A)Q_E(z)\right)-
\Tr\left(B(Q_F(z)R_F(-z)\right)-\bTr_F(\Id)+\bTr_E(\Id).
\label{spallb.68}\end{multline}
The second term vanishes at $z=0$ and evaluating the first gives 
\begin{equation}
\ind(A)=\rTr(a^{-1}D_Qa)-\bTr_F(\Id)+\bTr_E(\Id)
\label{spallb.69}\end{equation}
where we have replaced $A$ by its image in the quotient by the smoothing
operators. From this the homotopy invariance follows as before.

Not only does the formula \eqref{spallb.69} lead to the homotopy invariance
of the index, but it also shows the multiplicativity. Given three bundles
$E,F,G$ with $A_1\in\lPs{\cL,\epsilon }\bbZ{X;E,F}$ and
$A_2\in\lPs{\cL,\epsilon }\bbZ{X;F,G}$ elliptic with full symbolic images
$a_1,$ $a_2$ we see that 
\begin{multline}
\ind(A_2A_1)=\rTr((a_2a_1)^{-1}D_Q(a_2a_1))-\bTr_G(\Id)+\bTr_E(\Id)\\
=
\rTr((a_2^{-1}D_Qa_2)\rTr((a_1^{-1}D_Qa_1)-\bTr_G(\Id)+\bTr_E(\Id)=\ind(A_2)+\ind(A_1). 
\label{spallb.71}\end{multline}

The index of the direct sum of two operators is trivially the sum of the
indexes, so from \eqref{spallb.71} and the homotopy invariance we conclude
that the index actually defines a group homomorphism from K-theory as in
the untwisted case 
\begin{equation}
K^0(\com{T^*X},S^*X)\longrightarrow\bbR.
\label{spallb.72}\end{equation}
Here, any class in the K-theory of $\com{T^*X},$ the radial
compactification of the cotangent bundle, relative to its bounding sphere
bundle, is represented by an elliptic symbol $a\in\hom(E,F)$ over $S^*X,$
for bundles $E$ and $F$ over $X$ and the discussion above shows that the
index is the same for two representatives of the same class.

Now, from \eqref{spallb.72} we deduce that the map is actually vanishes on
torsion elements of K-theory, \ie is well defined on
$K^0(\com{T^*X},S^*X)\otimes\bbR.$ Thus, to prove the desired formula
\eqref{spallb.31} it suffices to check it on a set of elements of which
span the K-theory, over $\bbR$ (or $\bbQ).$ If $X$ is even-dimensional the
original observation of Atiyah and Singer is that the bundle-twisted
signature operators are enough to do this. This argument applies directly
here and the arguments of \cite{mms3} again apply to show that the local
index theorem for Dirac operators gives the formula in that case, and hence
proves it in general in the even-dimensional case. For the odd-dimensional
case it is enough to suspend with a circle to pass to the even-dimensional
case.
\end{proof}

\section{Star products}

Notice that in the construction of the star product in
Theorem~\ref{spallb.5} only the $\bbS$-invariant part of the Heisenberg,
and Toeplitz, algebra is used. There is a close connection between the
notion of a local line bundle and the $\bbS$-invariance; this is enough to
allow the invariant part of the algebroid to be constructed directly.

\begin{proposition}\label{18.6.2004.1} Let $L$ be an Hermitian line bundle
over a manifold $M$ with $S$ the circle bundle of $L$ then there is a
canonical isomorphism between distributions on $S\times S$ which are invariant
under the conjugation $\bbS$-action and distributions on the circle bundle, $Q,$
of $\pi^*_LL\otimes \pi^*_RL^{-1}=\Hom(L).$
\end{proposition}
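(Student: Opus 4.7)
I plan to identify the two spaces of distributions as the two sides of pullback along a natural principal $\bbS$-bundle map $\Phi : S\times S \to Q$. For $(s_1,s_2)$ lying over $(x,y) \in M^2$ set
\begin{equation*}
\Phi(s_1,s_2) = s_1 \otimes s_2^{-1} \in L_x\otimes L_y^{-1}=\Hom(L)_{(x,y)},
\end{equation*}
where $s_2^{-1}$ is the Hermitian dual of $s_2$; since $|s_1|=|s_2|=1$ the image has unit norm and so lies in the circle bundle $Q$. Under the conjugation action $e^{i\theta}\cdot(s_1,s_2)=(e^{i\theta}s_1,e^{i\theta}s_2)$ the two phase factors cancel in $s_1\otimes s_2^{-1}$, so $\Phi$ is $\bbS$-invariant.

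Next I would verify that $\Phi$ is a principal $\bbS$-bundle. For any $\sigma \in Q_{(x,y)}$ and any $s_2\in S_y$ the equation $s_1\otimes s_2^{-1}=\sigma$ has the unique solution $s_1=\sigma(s_2)\in S_x$, so $\Phi^{-1}(\sigma)$ is exactly the single orbit $\{(\sigma(s_2),s_2):s_2\in S_y\}$ of the diagonal $\bbS$-action. Hence $\Phi$ is a surjective submersion whose fibres are precisely the conjugation orbits, and the compactness and freeness of the action then realise $\Phi$ as a smooth principal $\bbS$-bundle.

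Given this, the isomorphism of distributions is a standard consequence: pullback $\Phi^* : \CmI(Q)\to \CmI(S\times S)$ is well defined because $\Phi$ is a submersion, its image lies in the $\bbS$-invariants, and fibre-integration against the normalized Haar measure on $\bbS$ provides a two-sided inverse on the invariant subspace. On smooth functions the descent statement for invariants is classical, and it extends to distributions by the continuity of pullback along a submersion together with the averaging property of Haar measure on the compact fibres.

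The one technical point on which care is needed is the density convention, since the statement identifies bare distributions on each side and one must check that the canonical map is not twisted by a stray volume factor. Because $\Phi$ covers the identity on $M^2$ and the circular fibres carry a canonical length form $d\theta$, the natural identification $\Omega_{S\times S}\simeq \Phi^*\Omega_Q \otimes \langle d\theta\rangle$ absorbs all such factors into the normalization of the Haar measure; once this normalization is fixed the proposition reduces directly to the principal-bundle structure established above.
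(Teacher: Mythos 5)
Your proposal is correct and follows the same route as the paper: you exhibit the map $(s_1,s_2)\mapsto s_1\otimes s_2^{-1}$ from $S\times S$ to the unit circle bundle $Q$ of $\Hom(L)$, observe that it is a principal $\bbS$-bundle whose fibres are the conjugation orbits, and deduce that pullback identifies distributions on $Q$ with the $\bbS$-invariant distributions on $S\times S$. The paper's proof is essentially this observation stated in one breath; you supply the additional (and legitimate) details about the inverse via fibre-averaging and the density normalization, but the underlying argument is the same.
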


\begin{proof} The map from the total product to the exterior tensor product 
\begin{multline}
S\times S\ni(p,\tau;p',\tau')\longrightarrow
(p,p',\tau\otimes(\tau')^{-1})\in Q\\
=\{(p,p',\sigma );(p,p')\in M\times M,\
\sigma \in L_p\otimes L_{p'}^{-1},\ |\sigma |=1\}
\label{18.6.2004.2}\end{multline}
is a circle bundle with fibre action of $\bbS$ given by the conjugation
action on $\pi_L^*L\times \pi_R^*L.$ Thus the invariant distributions on
$S\times S$ are precisely the pull-backs of distributions on $Q.$
\end{proof}

Still in the integral case, under this identification, the $\bbS$-invariant
Heisenberg operators are identified with the space of parabolic conormal
distributions on the `diagonal' 
\begin{equation}
\IHPs mS= I^{m'}(Q;D;\lambda),\
D=\{(p,p,s)\in Q;p=p',\ s=1\}
\label{spallb.6}\end{equation}
with $\lambda$ being the contact form on $Q.$

So, in the general case of a possibly non-integral symplectic form we
simply define 
\begin{equation}
\IHlPs\epsilon mS=I^{m+\frac12}_{\text{c}}(Q_\epsilon ,D,\lambda ;).
\label{spallb.7}\end{equation}

\begin{proposition}\label{spallb.8} The kernels \eqref{spallb.7} form an
  algebroid with composition restricted only by supports: 
\begin{equation}
\IHlPs{\epsilon}mS\circ\IHlPs{\epsilon'}{m'}S\subset
  \IHlPs{\epsilon+\epsilon'}{m+m'}S
\label{spallb.9}\end{equation}
for all sufficiently small $\epsilon,$ $\epsilon'>0.$
\end{proposition}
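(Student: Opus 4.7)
The plan is to reduce the composition \eqref{spallb.9} to the classical composition of parabolic conormal distributions in the Heisenberg calculus by localizing at the diagonal, where the local line bundle $\cL$ trivializes. For $A\in\IHlPs{\epsilon}{m}{S}$ and $B\in\IHlPs{\epsilon'}{m'}{S}$, interpreted as parabolic conormal distributions on $Q_{\epsilon}$ and $Q_{\epsilon'}$ respectively, I would define the composition as the pushforward on the triple product
\begin{equation*}
A\circ B=(\pi_C)_{*}\bigl(H(\pi_S^{*}A\otimes\pi_F^{*}B)\bigr),
\end{equation*}
in direct analogy with \eqref{spallb.22a}. Here $\pi_S,\pi_F,\pi_C$ are the three projections of \eqref{17.2.2004.5} and $H$ is the composition isomorphism \eqref{17.2.2004.2} of $\cL$, which in the integral case reduces to the multiplication of fibres of $\pi_L^{*}L\otimes\pi_R^{*}L^{-1}$. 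Associativity of the resulting product is then an immediate consequence of the associativity diagram \eqref{17.2.2004.9} for $H$.

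The support restriction in \eqref{spallb.9} falls out of the triangle inequality: the integrand on the triple product is supported on triples $(p,q,p')$ with $(p,q)\in Q_{\epsilon}$ and $(q,p')\in Q_{\epsilon'}$, so the pushforward to $M^{2}$ is supported in $Q_{\epsilon+\epsilon'}$. For sufficiently small $\epsilon,\epsilon'>0$ this remains inside the neighbourhood where $\cL$ and $H$ are defined and where the parabolic conormal calculus near $D$ is valid.

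To see that the composition actually lies in $\IHlPs{\epsilon+\epsilon'}{m+m'}{S}$, I would work locally in a small neighbourhood of any diagonal point. Using Lemma~\ref{17.2.2004.11} to trivialize $\cL$ there, the kernels become ordinary parabolic conormal distributions along $D$ with contact form $\lambda$, and the composition reduces to the classical Heisenberg composition of \cite{Beals-Greiner1}, \cite{Taylor3}, \cite{Geller1} and \cite{HHH2}. The standard parabolic composition lemma then gives the order count and confirms that the product is a parabolic conormal distribution of the claimed order. Independence of trivialization, and hence global well-definedness, follows from the cocycle property of the transition data for $\cL$, which is exactly what the associativity of $H$ encodes. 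The main technical obstacle is the classical Heisenberg symbol-composition lemma at the parabolic blow-up of $D$ -- the non-isotropic scaling forces one to use the osculating Heisenberg bundle rather than the ordinary cotangent bundle and to track orders carefully through the pushforward -- but this analysis is untouched by the local line bundle twist, since in any local trivialization $H$ contributes only a smooth, nowhere vanishing scalar factor.
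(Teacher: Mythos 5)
Your proof follows the same strategy as the paper's one-sentence argument (``This is a local result once the composition formula is written down invariantly and therefore follows from the standard theory of Heisenberg operators''): write out the invariant composition, localize, and reduce to the classical parabolic/Heisenberg composition lemma, observing that the twist by $\cL$ contributes only a smooth nonvanishing factor in any local trivialization. That is the right reduction, and your identification of the support estimate (triangle inequality) and associativity (diagram \eqref{17.2.2004.9}) is correct.

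One imprecision to flag: the formula $A\circ B=(\pi_C)_{*}\bigl(H(\pi_S^{*}A\otimes\pi_F^{*}B)\bigr)$ is, as written, the composition of $\cL$-twisted kernels on $M^2$ as in \eqref{spallb.22a}, whereas elements of $\IHlPs{\epsilon}{m}{S}$ are parabolic conormal distributions on the \emph{circle bundle} $Q_\epsilon$ of $\cL$, not $\cL$-twisted distributions on $M^2$. The invariant composition formula should therefore be phrased as a pushforward through the groupoid multiplication on $Q$ that $H$ induces \mhy the circle-bundle map $\pi_S^{*}Q\times_{M^3}\pi_F^{*}Q\to\pi_C^{*}Q$ followed by integration over the middle $M$-factor \mhy which is precisely what descends from the kernel composition on $S\times S$ under the identification of Proposition~\ref{18.6.2004.1} in the integral case. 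This matters because the Heisenberg (parabolic) conormal blow-up along $D$ lives on $Q$, including its circle direction, not on $M^2$. Also, the local reduction of $\cL$ to $L_{i,p_i}\boxtimes L_{i,p_i}^{-1}$ that you use comes from the discussion around \eqref{17.2.2004.13} rather than from Lemma~\ref{17.2.2004.11} (which constructs a multiplicative connection). With the composition phrased on $Q$, the rest of your argument goes through as stated.
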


\begin{proof} This is a local result once the composition formula is written
  down invariantly and therefore follows from the standard theory of
  Heisenberg operators.
\end{proof}

\begin{theorem}\label{spallb.10} For any symplectic manifold there exists
  $P_\epsilon\in\IHlPs\epsilon 0S$ with $P_\epsilon ^2=P_\epsilon$ modulo
  smoothing and $\sigma (P)$ the field of projections for a positive almost
  complex structure on $M$ and the associated invariant Toeplitz algebroid 
\begin{equation}
\ITplPs\epsilon mM=P_{\epsilon /3}\IHlPs{\epsilon /3}mSP_{\epsilon/3}
\label{spallb.11}\end{equation}
induces a star product on the quotient 
\begin{equation}
\CI(M)[[\rho ]]=\ITplPs\epsilon 0M/\ITplPs\epsilon {-\infty}M.
\label{spallb.12}\end{equation}
\end{theorem}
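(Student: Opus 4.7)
The plan is to mirror the Boutet de Monvel--Guillemin proof of Theorem~\ref{spallb.5}, replacing the globally defined circle bundle of an Hermitian line bundle with the local circle bundle $Q$ of $\cL$ sitting over a neighbourhood $B_\epsilon$ of the diagonal in $M^2$. Because the Heisenberg calculus used to define \eqref{spallb.7} is intrinsic and entirely local on $Q$, no genuine $\bbS$-action is needed; the local line bundle structure of $\cL$ from \Refsect{17.2.2004.17} supplies everything the $\bbS$-invariance supplied in the integral case.

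To construct $P_\epsilon$ I would first fix a compatible almost complex structure on $(M,\omega)$. This determines on each fibre of the contact line bundle of $Q$ a harmonic oscillator whose rank-one projection onto the vacuum is an admissible symbol in the Heisenberg calculus. Lift this symbol to some $P^{(0)}_\epsilon\in\IHlPs{\epsilon}{0}{S}$ supported in $Q_\epsilon$, and correct it by the usual iterative Neumann-type scheme within the symbolic Heisenberg calculus so that $P_\epsilon^2-P_\epsilon\in\IHlPs{\epsilon}{-\infty}{S}$; at every step the correction is a Heisenberg operator of strictly lower order and the same support, so the iteration closes without enlarging $\epsilon$.

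By Proposition~\ref{spallb.8} Heisenberg composition has supports adding, so the factor $\epsilon/3$ in \eqref{spallb.11} is chosen precisely so that three-fold products of elements of $\IHlPs{\epsilon/3}{*}{S}$ still lie in $\IHlPs{\epsilon}{*}{S}$; combined with $P_{\epsilon/3}^2\equiv P_{\epsilon/3}$ modulo smoothing, this makes $\ITplPs{\epsilon}{*}{M}$ an associative algebroid with $\ITplPs{\epsilon}{-\infty}{M}$ a two-sided ideal. The identification \eqref{spallb.12} then follows from the standard Toeplitz symbol calculus: the full Heisenberg symbol of $PAP$, restricted to the vacuum range of $\sigma(P)$, is a section of a canonically trivial line on $Q$ which descends to $M$ as a formal power series in the inverse length $\rho$ on the contact line bundle, exactly as in \eqref{spallb.4}.

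It remains to check that the induced product on $\CI(M)[[\rho]]$ is a star product in the sense of \eqref{spallb.13}--\eqref{spallb.14}. Associativity is inherited from operator composition modulo smoothing; polyhomogeneity of Heisenberg symbols together with the usual stationary-phase expansion of the composition formula yields the expansion in bilinear differential operators $B_{k,j,l}$; and $B_{0,0,0}(a,b)=ab$ is immediate from the projection property of $P_\epsilon$ and the fact that the vacuum projections are rank one. The essential check, and the only substantive point, is that $B_{1,0,0}(a,b)=\{a,b\}$. This reduces to the symbolic identity that, since $d\eta=\omega$ on the contact manifold, the leading term in the Heisenberg commutator of two Toeplitz symbols reproduces the Poisson bracket of $(M,\omega)$. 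The computation is local and in a Darboux chart is the same as in the integral case treated in \cite{BoutetdeMonvel-Guillemin1,HHH2}; the replacement of a global circle bundle by the local $Q$ affects neither the Heisenberg symbol calculus nor the identification of the commutator with $\omega$, so although this is the main obstacle, it is resolved by the pre-existing local Heisenberg-Toeplitz calculus.
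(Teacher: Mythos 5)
Your proposal is correct and follows exactly the route the paper intends: the paper's proof is the single sentence that, once Proposition~\ref{18.6.2004.1}, the definition \eqref{spallb.7}, and Proposition~\ref{spallb.8} are in place, the result ``is in essence a local result and hence follows as in the integral case,'' and your write-up simply makes that sentence explicit (construction of $P_\epsilon$ by lifting the vacuum-projection symbol of the harmonic oscillator for a compatible almost complex structure and correcting iteratively, associativity via the support additivity in \eqref{spallb.9} and the choice of $\epsilon/3$, identification of the quotient with $\CI(M)[[\rho]]$, and verification of $B_{0,0,0}$, $B_{1,0,0}$ by the same local Heisenberg--Toeplitz symbol calculus as in \cite{BoutetdeMonvel-Guillemin1,HHH2}). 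The only small imprecision is the claim that the iteration for $P_\epsilon$ closes ``without enlarging $\epsilon$'': composing kernels supported in $Q_\epsilon$ produces kernels supported in $Q_{2\epsilon}$, so what one actually arranges by the symbolic iteration is that the conormal singularity of $P_\epsilon^2-P_\epsilon$ cancels, leaving a smooth kernel whose support may be in the larger $Q_{2\epsilon}$; this is exactly why the Toeplitz algebroid is defined with $P_{\epsilon/3}$ and $\IHlPs{\epsilon/3}{m}{S}$ in \eqref{spallb.11}, and it matches the loose phrase ``$P_\epsilon^2=P_\epsilon$ modulo smoothing'' in the theorem statement, so this does not affect the argument.
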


Again, the global setup having been defined, this is in essence a local
result and hence follows as in the integral case.

Note that only `pure' star products arise directly this way, those
classified by $H^2(M,\bbR).$ As in Fedosov's original construction, one can
pass to the general star product by twisting asymptotic sums of pure star
products.


\def\cprime{$'$} \def\cprime{$'$} \def\cdprime{$''$} \def\cprime{$'$}
  \def\cprime{$'$} \def\ocirc#1{\ifmmode\setbox0=\hbox{$#1$}\dimen0=\ht0
  \advance\dimen0 by1pt\rlap{\hbox to\wd0{\hss\raise\dimen0
  \hbox{\hskip.2em$\scriptscriptstyle\circ$}\hss}}#1\else {\accent"17 #1}\fi}
  \def\cprime{$'$} \def\ocirc#1{\ifmmode\setbox0=\hbox{$#1$}\dimen0=\ht0
  \advance\dimen0 by1pt\rlap{\hbox to\wd0{\hss\raise\dimen0
  \hbox{\hskip.2em$\scriptscriptstyle\circ$}\hss}}#1\else {\accent"17 #1}\fi}
  \def\cprime{$'$} \def\bud{$''$} \def\cprime{$'$} \def\cprime{$'$}
  \def\cprime{$'$} \def\cprime{$'$} \def\cprime{$'$} \def\cprime{$'$}
  \def\cprime{$'$} \def\cprime{$'$} \def\cprime{$'$} \def\cprime{$'$}
  \def\polhk#1{\setbox0=\hbox{#1}{\ooalign{\hidewidth
  \lower1.5ex\hbox{`}\hidewidth\crcr\unhbox0}}} \def\cprime{$'$}
  \def\cprime{$'$} \def\cprime{$'$} \def\cprime{$'$} \def\cprime{$'$}
  \def\cprime{$'$} \def\cprime{$'$} \def\cprime{$'$} \def\cprime{$'$}
  \def\cprime{$'$} \def\cprime{$'$} \def\cprime{$'$} \def\cprime{$'$}
  \def\cprime{$'$} \def\cprime{$'$} \def\cprime{$'$} \def\cprime{$'$}
  \def\cprime{$'$} \def\cprime{$'$} \def\cprime{$'$} \def\cprime{$'$}
  \def\cprime{$'$} \def\cprime{$'$} \def\cprime{$'$} \def\cprime{$'$}
  \def\cprime{$'$} \def\cprime{$'$}
\providecommand{\bysame}{\leavevmode\hbox to3em{\hrulefill}\thinspace}
\providecommand{\MR}{\relax\ifhmode\unskip\space\fi MR }
\providecommand{\MRhref}[2]{%
  \href{http://www.ams.org/mathscinet-getitem?mr=#1}{#2}
}
\providecommand{\href}[2]{#2}

\end{document}